\documentclass[12pt]{article}

\usepackage{amsthm}
\newtheorem{teo}{Theorem}[section]
\newtheorem{prop}[teo]{Proposition}
\newtheorem{cor}[teo]{Corollary}

\theoremstyle{definition}
\newtheorem{defi}[teo]{Definition}
\newtheorem{rem}[teo]{Remark}
\newtheorem{problem}[teo]{Problem}

\usepackage{amssymb}
\usepackage{amsmath}

\usepackage[all]{xypic}

\usepackage{booktabs}

\usepackage{multirow}
\usepackage{array}

\usepackage{enumerate}

\usepackage[plain]{algorithm}
\usepackage{algorithmic}

\algsetup{linenosize=\scriptsize}

\newcommand{\duepuntiuguale}{\mathrel{\mathop:}=}

\newcommand{\confspace}{X}

\newcommand{\module}{{\cal A}}

\newcommand{\scheme}{A}

\newcommand{\hyperplanesep}[1]{{\cal H}_{#1}}
\newcommand{\featuresep}[1]{\overline{{\cal H}}_{#1}}

\newcommand{\calT}{{\cal T}}
\newcommand{\calV}{{\cal V}}
\newcommand{\calE}{{\cal E}}

\newcommand{\rightarc}[2]{\overrightarrow{#1#2}}
\newcommand{\leftarc}[2]{\overleftarrow{#1#2}}

\newcommand{\dompath}[2]{DP\left(#1,#2\right)}

\newcommand{\derived}[1]{\widetilde{#1}}

\newcommand{\maxcomp}[1]{#1_{\mathrm{MAX}}}

\newcommand{\nodeone}[1]{*=<12pt>[o][F-]{#1}}
\newcommand{\nodetwo}[1]{*=<14pt>[o][F-]{#1}}
\newcommand{\nodethree}[1]{*=<16pt>[o][F-]{#1}}
\newcommand{\nodesquaretwo}[1]{*=<14pt>[F-]{#1}}
\newcommand{\arc}[1]{\ar @{-} #1 |-(0.57){\SelectTips{cm}{}\object@{>}}}
\newcommand{\arccurv}[2]{\ar @{-} #2 #1 |-(0.57){\SelectTips{cm}{}\object@{>}}}
\newcommand{\arccurvshift}[3]{\ar @{-} #2 #1 |-(#3){\SelectTips{cm}{}\object@{>}}}

\newcommand{\riquadrotratt}{\ar@{.} +UL(1.5);+UR(1.5) \ar@{.} +UR(1.5);+DR(1.5)
\ar@{.} +DR(1.5);+DL(1.5) \ar@{.} +DL(1.5);+UL(1.5) }
\newcommand{\riquadrotrattbig}[4]{\ar@{.} #1+UL(1.5);#2+UR(1.5) \ar@{.} #2+UR(1.5);#3+DR(1.5)
\ar@{.} #3+DR(1.5);#4+DL(1.5) \ar@{.} #4+DL(1.5);#1+UL(1.5) }
\newcommand{\freccia}[1]{\ar@{.} #1 |-{\object@{>}}}

\newcommand{\R}[0]{\mathbb{R}}
\newcommand{\Cal}[1]{\mathcal{#1}}
\newcommand{\K}[0]{\mathbb{K}}
\newcommand{\CC}[0]{\mathbb{C}}

\title{Modularity and Optimality in Social Choice}

\author{Gennaro {\sc Amendola}\footnote{Department of Mathematics and Applications, University of Milano-Bicocca (Type~A Research Fellowship, formerly ``E.~De~Giorgi'' grant from the Department of Mathematics of the University of Salento), gennaro.amendola@unimib.it}
\and Simona {\sc Settepanella}\footnote{LEM, Scuola Superiore Sant'Anna, s.settepanella@sssup.it}}
\begin{document}

\maketitle

\begin{abstract}
Marengo and the second author have developed in the last years a geometric model of social choice when this takes place among bundles of interdependent elements, showing that by bundling and unbundling the same set of constituent elements an authority has the power of determining the social outcome.
In this paper we will tie the model above to tournament theory, solving some of the mathematical problems arising in their work and opening new questions which are interesting not only from a mathematical and a social choice point of view, but also from an economic and a genetic one.
In particular, we will introduce the notion of u-local optima and we will study it from both a theoretical and a numerical/probabilistic point of view;
we will also describe an algorithm that computes the universal basin of attraction of a social outcome in $O(M^3\log M)$ time (where $M$ is the number of social outcomes).
\end{abstract}

\begin{center}
{\small\noindent{\bf Keywords}:\\
Social rule, modularity, object, optimum,\\
hyperplane arrangement, tournament, algorithm.}
\end{center}

\begin{center}
{\small\noindent{\bf MSC (2010)}:
05C20, 05C85, 52C35.}
\end{center}

\begin{center}
{\small\noindent{\bf JEL Classification}:
D03, D71, D72.}
\end{center}

\begin{center}
{\small\noindent{\bf ACM CCS (1998)}:
G.2.2.}
\end{center}

\section*{Introduction}

In~\cite{Arrow} Arrow created {\em modern social choice theory}, a rigorous melding of social ethics and voting theory with an economic flavor.
The central aim of social choice theory is to analyze the aggregation of preferences.
Assume there is a society of $n$ agents indexed by $i = 1,\dotsc,m$.
Each agent has his own well-behaved preference $\succeq_i$ over some space of possibilities (or {\em social outcomes}) $X$, i.e.~a total order on the set $X$.
Let ${\cal P}$ be the set of well-behaved preferences.
The element $(\succeq_1,\dotsc,\succeq_m) \in {\cal P}^m$ is the {\em profile} of a society.
The goal is to put all of these preferences together to come up with a single {\em system of social preferences} or {\em social rule}, i.e.~a total order on the set $X$, to decide matters of policy and to evaluate welfare.
Namely, a {\em social choice function} (or {\em social decision rule})
$$
{\Cal R}: {\cal P}^m \longrightarrow {\cal P}
$$
is needed.

This social decision rule should fulfill the following properties.
\begin{list}{}{\setlength{\itemindent}{-1cm} \setlength{\labelwidth}{1cm}}
\item\textsc{Completeness and Transitivity}\quad
With this economists means that society can make a decision about any social outcome and can rank all social outcomes.
(Obviously, this property is intrinsic in the definition of the function ${\cal R}$.)
\item\textsc{Paretianity}\quad
If everyone unanimously prefers $x$ to $y$ then so should 
society.
\item\textsc{Universal Domain Property}\quad
No matter what kind of wacky preferences people
may have, so long as they are well-behaved, ${\cal R}$ has to be able to 
deal with them. 
In other words there are no restrictions on the profiles of
preferences, i.e. on the elements in ${\cal P}^m$.
\item\textsc{Independence of Irrelevant Alternatives}\quad
Whether or not society prefers $x$ to $y$ does not depend on what people think
of any other {\em irrelevant alternative} $z$.
This can be formally stated by saying that if there are two profiles of
individual preferences $(\succeq_1,\dotsc,\succeq_m)$ and 
$(\succeq'_1,\dotsc,\succeq'_m)$
such that
$$
x \succeq_i y\quad \text{if and only if}\quad x \succeq'_i y,
$$
then
$$
x\ {\cal R}(\succeq_1, \ldots , \succeq_m)\ y\quad \text{if and only if}\quad
x\ {\cal R}(\succeq'_1, \ldots , \succeq'_m)\ y.
$$
\item\textsc{Nondictatorship}\quad
An agent $a_i$ is said to be {\em dictatorial} if, for all $x,y \in X$, whenever $a_i$ prefers $x$ to $y$ society prefers $x$ to $y$, i.e.~${\cal R}$ is the projection on the $i$-th component.
The social decision rule ${\cal R}$ is said to be {\em nondictatorial} if it is not a projection map.
\end{list}
Arrow~\cite{Arrow} proved that such a function does not exist.
Therefore, in order to overcome this problem, in social choice theory it is a customary convention to drop the transitivity request.

Mathematicians and economists studied this problem during the last 50 years with different approaches.
For example, tournament (and, in general, graph) theory turned out to be strictly connected to voting and social choice problems, since Landau started to study this subject~\cite{LandauI,LandauII,LandauIII}.
In the works of Eckmann~\cite{ecman}, Eckmann, Ganea and Hilton~\cite{ecmannb}, and Weinberger~\cite{wein} there has been an ``unexpected application of algebraic topology to a different field of intellectual enterprise,''\footnote{Cited by B.~Eckmann.} i.e. the social choice theory.
Topology is also used to study social choice problems as, for instance, Chichilnisky~\cite{cici1,cici2,cici3} and Baryshnikov~\cite{bary} have done.
Very recently, Saari~\cite{saari94,Saarib} used geometry to analyze the matter of voting.
Moreover, Terao~\cite{terao} introduced an admissible map of chambers of a 
real central arrangement which is a generalization of a social welfare function.

Social choice theory usually assumes that agents are
faced with a set of exogenously given and mutually exclusive alternatives. These
alternatives are ``simple'', in the sense that are one-dimensional objects or, even when
they are multidimensional, they are simply points in some portion of the homogeneous $\mathbb{R}^n$ space and they lack an internal structure that limits the set of possible alternatives.

Many choices in real life situations depart substantially from this simple setting.
Choices are often made among bundles of interdependent elements.
These bundles may be formed in a variety of ways, which in turn affect the selection process of a social outcome.
For instance, in the typical textbook example of social choice, where a group of friends decides what to do for the evening, the choice set is \{movie, concert, restaurant, dinner at home,\ldots\}.
However, at a closer scrutiny, these alternatives are neither primitive nor exogenously given, because they are labels for bundles of elements (e.g. with whom, where, when,\ldots) and the preferences are unlikely to be expressed before the labels get specified in their constituting elements.
Moreover, a member of the group could easily obtain a social outcome close to the one he or she prefers by carefully crafting the objects and possibly designing a new set of objects.
Other examples can be candidates and parties in political elections (which stand for
complex bundles of interdependent policies and personality traits) or packages of policies on which committees and boards are called upon to decide.

In~\cite{MarengoSette} Marengo and the second author develop a model of social choice among
bundles of elements, which they call {\em objects}.
They show that the outcome of the
social choice process is highly dependent on the way these bundles are
formed.
By bundling and unbundling the same set of constituent elements (they call this the {\em object construction power}) an authority may have the power to determine the social outcome.
The object construction power is stronger than the agenda power (i.e.~the power to decide the order on which the social outcomes are decided on), traditionally studied in the literature (for instance, by McKelvey~\cite{mckelvey76}).
Moreover, in their approach, objects decompose the computationally complex search space into quasi-separable subspaces (see Simon~\cite{simon82}), simplifying the computational task and making decisions possible.
They also show that by appropriately designing objects it is possible to break almost all intransitive cycles, which frequently characterize social choice.

In order to formally analyze the properties of a social choice model
with object construction and achieve general results, they use geometric properties of
hyperplane arrangements and link them to graph theory by means of Salvetti's Complex.
In this respect, the model of Marengo and the second author is a novel
contribution to the analysis of the 
relation between discrete problems of social choice and their topological structure.
It provides a bridge between a geometrical representation and a topological one of a social choice
problem to create a more general framework in which the topological space is manipulable
through object construction.

A local study is strictly connected to the geometric structure of the hyperplane arrangement and to the ``local'' structure of the graph, while global properties depend also on the whole graph.
Therefore, in the search for global properties also combinatorial and computational problems arise.

\bigskip

In this paper we tie the model described in~\cite{MarengoSette} to tournament theory.
This new link allows us to get results and opens new problems.
Tournaments are relevant in many fields of science, so they have been greatly studied by many mathematicians, between~1940 and~1970, and almost everything has been done.
We translate the voting and social choice problems, arising in~\cite{MarengoSette}, into new tournament-theory problems interesting for mathematicians too.

Moreover, modularity plays a fundamental role in many Natural Complex Systems~\cite{Modularity} and hence we believe that this model can be applied to other fields of science, e.g.~genetics (see Stadler~\cite{Stadler}).
We plan to go into this subject in a subsequent paper.

In Section~\ref{sec:preliminaries} we will recall some basic notions
on Hyperplane Arrangements, Salvetti's Complex and Tournaments.
In Section~\ref{sect:model} we will give basic notions on social rules, we will describe Marengo and the second author's model, and we will also recall their main results.
In the last part of the section we will define the notion of {\em u-local optimum} and ({\em u-}){\em deepness} of a social outcome.
The former is a compromise between the notion of local optimum and global optimum.
In order to obtain a particular local optimum after the voting process it is enough to have the power of deciding the {\em status quo} from which the voting process starts.
In contrast, u-local optima are characterized by the property of being obtainable after the voting process by means of object construction power only.
This is significant, because it may happen that whoever has the object construction power does not have the power of deciding the {\em status quo} from which the voting process starts.
The deepness and the u-deepness measure the length of voting processes.

In Section~\ref{sec:results} we will give results that tie the model described in~\cite{MarengoSette} to tournament theory.
This link will allow us to prove results in order to compute the universal basin of attraction of a given social outcome (which is a studied and open problem in economics).

In Section~\ref{sec:probability} we will start studying the problem of probability on tournaments with the extra module structure.
We will compute the maximum number of local optima that a given social rule can have and the probability to have a given number of local optima in a two dimensional social rule.
This probability is related to the phenomenon (very important and studied in economics) of the trade-off between {\em decidability} (i.e.~the possibility of reaching some social optimum in a feasible time) and {\em non manipulability} (i.e.~the convergence of the social decision process to a unique global outcome that does not depend upon initial condition and agenda).
Then it would be very interesting to generalize this result to an $n$-dimensional space of features. 

We will also define a function to measure the gain in using Marengo and the second author's model instead of the classical one by means of the probabilities that a social outcome is an optimum in the two models.

In Sections~\ref{sec:algorithm} and~\ref{sec:numerical} we will approach the far more difficult problem of understanding when a local optimum is an u-local or a global one.
In the former section we will give an algorithm that computes the universal basin of attraction of a given social outcome in $O(M^3\log M)$ time, where $M$ is the number of social outcomes.
We point out that this problem deals with both object constructions and agendas.
Since there could be infinitely many agendas, the problem it is not finite {\em a priori}.
It is not difficult to reduce the problem to a finite one, but a simple brute-force algorithm would take far more than exponential time.
This algorithm has been implemented by the first author who has written the computer program~{\tt FOSoR}~\cite{FOSoR}.
In Section~\ref{sec:numerical} we will give numerical data obtained by means of the computer program~{\tt FOSoRStat}~\cite{FOSoRStat} (created by first author) that computes statistics on the number of social rules with a given number of (u-)local optima.

The last section is devoted to one example.
It is treated in some detail because it is the smallest in which all kinds of optima (local, u-local and global) appear.

\paragraph{Acknowledgements}
The authors are grateful to Prof.~Luigi Marengo for his useful comments and corrections.

The first author is grateful to Antonio Caruso for his useful discussions on and help for computer science problems during the beautiful period spent at the Department of Mathematics in Lecce.
He would also like to thank the Department of Mathematics and Applications in Milano for the nice welcome.

\section{Preliminaries}
\label{sec:preliminaries}

\subsection{Hyperplane arrangements and Salvetti's complex}

In this section we will recall some basic notions from the theory of hyperplanes 
arrangements. The interested reader is referred to, for instance, Orlik and Terao~\cite{OT92} for a much more detailed and extended study.

\paragraph{Hyperplane arrangements}
In geometry and combinatorics, an {\em arrangement of hyperplanes} is a finite set $\Cal{A}$ of hyperplanes in a linear, affine, or projective space $S$.
The {\em cardinality} $\left|\Cal{A}\right|$ of the arrangement $\Cal{A}$ is the number of hyperplanes in $\Cal{A}$.

One is normally interested both in the real and in the complex case, hence let $\K$ be either $\R$ or $\CC$ and let $V$ be either $\R^n$ or $\CC^n$.
Thus, given the canonical base
$\{e_1,\dotsc ,e_n\}$ in $V$, each hyperplane $H \in \Cal{A}$ is the kernel of a degree-1 polynomial $\alpha_H \in \K[x_1,\dotsc, x_n]$, defined up to a constant.
The product
\begin{equation*}
\Cal{Q}(\Cal{A})= \prod_{H \in \Cal{A}} \alpha_H
\end{equation*}
is called a {\em defining polynomial} of $\Cal{A}$.

If $\Cal{B}$ is a subset of $\Cal{A}$, it is called a {\em subarrangement} of $\Cal{A}$.
The {\em intersection semilattice} of $\Cal{A}$, denoted by $L(\Cal{A})$, is the set of all non-empty intersections of elements of $\Cal{A}$, i.e.
\begin{equation*}
L(\Cal{A})=\left\{\textstyle{\bigcap_{H \in \Cal{B}} H} \mid \Cal{B} \subseteq \Cal{A} \right\}.
\end{equation*}
These subspaces are called the {\em flats} of $\Cal{A}$.
The set $L(\Cal{A})$ is partially ordered by reverse inclusion.

The {\em complement} of $\Cal{A}$ is defined as
\begin{equation*}
M(\Cal{A}) = V \setminus \bigcup_{H \in \Cal{A}} H .
\end{equation*}
The complement of an arrangement $\Cal{A}$ in $\R^n$ is clearly disconnected.
It is made up of separate pieces called {\em chambers} or {\em regions}, each of which may be either bounded or unbounded.

Each flat of $\Cal{A}$ is also divided into sections by the hyperplanes that do not contain the flat; these sections are called the {\em faces} of
$\Cal{A}$. Chambers are faces because the whole space is a flat. The faces of codimension $1$ may be called the {\em facets} of $\Cal{A}$.
The {\em face semilattice} of an arrangement is the set of all faces, ordered by inclusion.
The arrangement $\Cal A$ said to be {\em essential} if the minimal dimensional flats are points (that we call \emph{vertices} of the arrangement).

Every arrangement $\Cal{A}_\R$ in $\R^n$ also generates an arrangement over $\CC$.
Let $\Cal{Q}(\Cal{A}_{\R})=\prod_{H \in \Cal{A}_{\R}} \alpha_H$ be the defining (real) polynomial of $\Cal{A}_\R$ in $\R^n$. The {\em $\CC$-extended} arrangement $\Cal{A}_\CC$ is the arrangement in $\CC^n$ that consists of the hyperplanes that are the kernel of the polynomials $\alpha_H$ in $\CC^n$ (instead of $\R^n$).
The arrangement $\Cal{A}_\CC$ is also called the {\em complexification} of $\Cal{A}_\R$.

\paragraph{Salvetti's complex}
As shown in~\cite{Sal87}, if the arrangement ${\Cal A}_{\CC}$ is the complexification of a real one ${\Cal A}_{\R}$, there is a regular CW-complex $\Cal{S}({\Cal A}_{\R})$ having the homotopy type of the complement $M({\Cal A}_{\CC})$.
We recall here briefly the construction of this complex, which is called {\em Salvetti's complex}.

Let ${\Cal A}_{\R}=\{H_{\R}\}$ be an essential finite affine hyperplane arrangement in $\R^n$.
Let $M({\Cal A}_{\CC}) =\CC^n \setminus \bigcup_{H_{\R} \in{\cal A}_{\R}} H_{\CC}$ be the complement to the complexified arrangement. The CW-complex $\Cal S({\Cal A}_{\R})$ can be characterized as follows. Let $\mathbf{S}\duepuntiuguale\{F^k\}$ be the stratification of $\R^n$ into facets $F^k$ that is induced by the arrangement~\cite{bourbaki68}, where the exponent $k$ stands for codimension.  Then $\mathbf S$ has a standard partial ordering defined by
\begin{equation*}
F^i \ <_{\mathbf S} F^j \quad \text{if}\quad \operatorname{clos}(F^i)\supset F^j
\end{equation*}
where $\operatorname{clos}(F^i)$ is the closure of $F^i$.

The  $k$-cells of $\Cal{S}({\Cal A}_{\R})$ bijectively correspond to the pairs $[C<_{\mathbf S} F^k]$, where $C$ is a chamber of $\mathbf S$.
A $k$-cell $[C<_{\mathbf S} F^k]$ is in the boundary of a $j$-cell $[D<_{\mathbf S} G^j]$, with $k<j$, if
\begin{itemize}
\item
$F^k<_{\mathbf S} G^j$,
\item
the chambers $C$ and $D$ are contained in the same chamber of the sub-arrangement
$$
\{H_{\R}\in{\Cal A}_{\R}\mid F\subset H_{\R}\}.
$$
\end{itemize}
The previous conditions are equivalent to saying that $C$ is the chamber of ${\Cal A}_{\R}$ ``closest'' to $D$ among those containing $F^k$ in their closure.

It is possible to realize $\cal{S}({\Cal A}_{\R})$ inside $\CC^n$ with explicitly given attaching maps of the cells (see~\cite{Sal87}).

\subsection{Graphs and tournaments}
\label{sec:graphs_tournaments}

We will give here a short summary of graph theory to fix notation.
For a complete discussion we refer the reader to Chartrand and Lesniak~\cite{Chartrand-Lesniak} and Moon~\cite{Moon}.

\paragraph{Graphs}
We will only take oriented simple graphs into account.
Hence, throughout the paper, a {\em graph} will be a pair $(\calV,\calE)$, where $\calV$ is the set of {\em nodes} and $\calE$ is the set of {\em arcs}, such that each pair of nodes $\{p,q\}$ is connected by at most one arc (either $\rightarc{p}{q}$ or $\leftarc{p}{q}$).
If the arc $\rightarc{p}{q}$ (or $\leftarc{q}{p}$) is in $\calE$, the node $p$ is said to {\em dominate} $q$.
A {\em sub-graph} of $(\calV,\calE)$ is a graph $(\calV',\calE')$ such that $\calV'\subset\calV$ and $\calE'\subset\calE$.

A {\em path} $\path{p}{q}$ from $p$ to $q$ is a sequence of arcs of the type
$\rightarc{p}{p_1},\rightarc{p_1}{p_2},\dotsc,\rightarc{p_{k}}{q}$.
A {\em domination path} $\dompath{p}{q}$ from $p$ to $q$ is a sequence of arcs of the type
$\leftarc{p}{p_1},\leftarc{p_1}{p_2},\dotsc,\leftarc{p_{k}}{q}$.
A {\em cycle} $\path{p}{p}$ (resp.~a {\em domination cycle} $\dompath{p}{p}$) is a path (resp.~a domination path) from $p$ to itself.
The {\em length} of a (domination) path is the number of arcs it contains; a cycle of length $k$ is called $k$-cycle.

\paragraph{Tournaments}
A {\em tournament} is a complete graph (i.e.~each pair of nodes $\{p,q\}$ is connected by an arc).
By $\calT$ we will always denote a tournament with $M$ nodes.
A {\em sub-tournament} of $\calT$ is a sub-graph of $\calT$ that is a tournament.

A tournament is said to be {\em reducible} if it is possible to partition its nodes into two non-empty subsets $\calV_1$ and $\calV_2$ in such a way that all the nodes in $\calV_1$ dominate all the nodes in $\calV_2$; otherwise it is called {\em irreducible}.
A tournament is irreducible if and only if each pair of nodes is contained in a cycle.
There is no bound on the length of the cycle, but every node of an irreducible tournament is contained in a $k$-cycle for all $k=3,4,\dotsc,M$.
In particular, any irreducible tournament contains a hamiltonian cycle.
A path is {\em hamiltonian} if it passes through all nodes.
A tournament is {\em transitive} if it contains no cycle.
Every tournament contains a hamiltonian path; if the tournament is transitive the hamiltonian path is unique.

An {\em irreducible component} $\calT_i$ of $\calT$ is a maximal irreducible sub-tournament of $\calT$.
The nodes of these irreducible components form a partition of the nodes of $\calT$.
Moreover, all the nodes of a component $\calT_i$ either dominate or are dominated by all the nodes of another component $\calT_j$.
The transitive tournament $\derived{\calT}$ whose nodes are the irreducible components of $\calT$ and whose arcs are deduced by any arc between the two irreducible components (see Figure~\ref{fig:derived_tournament}) is called the {\em condensation} of $\calT$.
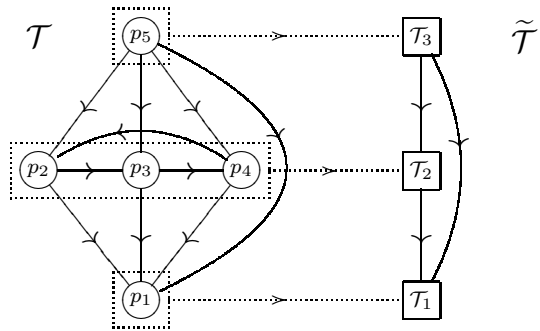
\begin{figure}[t]
  \begin{center}
  $\xymatrix@R=35pt{
        {\mbox{$\calT$}} & \nodetwo{p_5} \arc{[ld]} \arc{[rd]} \arc{[d]}
          \arccurvshift{[dd]}{@/^55pt/}{.38}
          \riquadrotratt \freccia{+R(1.5);[rrr]} &
        & & \nodesquaretwo{\calT_3} \arc{[d]} \arccurvshift{[dd]}{@/^15pt/}{.4}
        & {\mbox{$\derived{\calT}$}}
        \\
        \nodetwo{p_2} \arc{[r]} \arc{[rd]} & \nodetwo{p_3} \arc{[r]} \arc{[d]} & \nodetwo{p_4}
          \arccurvshift{[ll]}{@/_15pt/}{.65} \arc{[ld]}
          \riquadrotrattbig{[ll]}{[]}{[]}{[ll]} \freccia{+R(1.5);[rr]}
        & & \nodesquaretwo{\calT_2} \arc{[d]}
        \\
        & \nodetwo{p_1} \riquadrotratt \freccia{+R(1.5);[rrr]} &
        & & \nodesquaretwo{\calT_1}
        }$
  \end{center}
  \caption{The condensation of a tournament.}
  \label{fig:derived_tournament}
\end{figure}
Without lack of generality, we will choose the subscripts so that if $i>j$ then $\calT_i$ dominates $\calT_j$.
The maximal component of $\calT$ will be denoted by $\maxcomp{\calT}$.

\paragraph{Score}
The number of nodes dominated by a node $p$ is called the {\em score} of $p$.
The sequence of the scores of the the nodes of a tournament is called the {\em score sequence} of $\calT$.
Up to a relabeling of the nodes, we can suppose that the score sequence of $\calT$ is non-decreasing.
A tournament is transitive if and only if its score sequence is $0,1,\dotsc,M-1$.
A non-decreasing sequence $s_1,s_2,\dotsc,s_M$ of nonnegative integers is the score sequence of a tournament if and only if $\sum_{i=1}^{k}s_i\geqslant\binom{k}{2}$ for each $k<M$ and $\sum_{i=1}^{M}s_i=\binom{M}{2}$ hold.
A tournament is irreducible if and only if all the $M-1$ inequalities above are strict.

In order to find the irreducible components (and hence the condensation) of $\calT$, the following very simple algorithm, having complexity $O(M^2)$, can be applied.
\begin{enumerate}
\item
Find the smallest $k$ such that $\sum_{i=1}^{k}s_i=\binom{k}{2}$; the sub-tournament $\calT_1$ made up of the $k$ nodes with smallest score is an irreducible component of $\calT$.
\item
Remove $\calT_1$ from $\calT$ and repeat Step~1 until no node is left.
\end{enumerate}
Note that if the aim is to find the maximal component $\maxcomp{\calT}$, one can start ``from above'' and take into account $M-1-s_i$ instead of $s_i$ (i.e.~the number of nodes that dominate the $i$-th node), so that only one step is needed.
Another algorithm finding the irreducible components of any graph and having complexity $O(M^2)$ is shown in Kocay and Kreher~\cite{Kocay-Kreher}.

\paragraph{Cycles}
A measure of how far a tournament is from being transitive is the number of 3-cycles.
If the score sequence of $\calT$ is $s_1,s_2,\dotsc,s_M$, then the number of $3$-cycles is at most $\binom{M}{3}-\sum_{i=1}^{M}\binom{s_i}{2}\leqslant
\left\{\begin{array}{ll}\frac{M^3-M}{24} & \mbox{if $2\nmid M$}\\[1pt]
  \frac{M^3-4M}{24} & \mbox{if $2\mid M$}\end{array}\right.$.
If $\calT$ is irreducible, the $3$-cycles are at least $M-2$.

\paragraph{Number of tournaments}
The number of tournaments with $M$ nodes (up to relabeling) is $T(M)=\displaystyle{\sum_{(d)}\frac{2^D}{N}}$, where
\begin{itemize}
\item
$(d)=(d_1,d_2,\dotsc,d_M)$ is multi-index with $d_{2i}=0$, $d_{2i+1}\geq0$ and $\sum_{i=1}^{M}i\cdot d_i=M$,
\item
$D$ is $\frac12\left(\sum_{i,j=1}^{M}d_id_j\gcd(i,j)-\sum_{i=1}^{M}d_i\right)$,
\item
$N$ is $\prod_{i=1}^M i^{d_i}d_i!$.
\end{itemize}
The values of $T(M)$ for $M\leqslant12$ are given in Table~\ref{tab:num_tournaments}.
\begin{table}[t]
  \begin{center}
  \begin{small}
  \begin{tabular}{r@{\extracolsep{10pt}}rcr@{\extracolsep{10pt}}rcr@{\extracolsep{12pt}}r}
  \toprule
  $M$ &   $T(M)$ & & $M$ &   $T(M)$ & & $M$ &   $T(M)$ \\
  \cmidrule{1-2}\cmidrule{4-5}\cmidrule{7-8}
  1 & 1 & & 5 &   12 & &  9 &       191536 \\
  2 & 1 & & 6 &   56 & & 10 &      9733056 \\
  3 & 2 & & 7 &  456 & & 11 &    903753248 \\
  4 & 4 & & 8 & 6880 & & 12 & 154108311168 \\
  \bottomrule
  \end{tabular}
  \end{small}
  \end{center}
  \caption{The number of tournaments.}
  \label{tab:num_tournaments}
\end{table}
As $M$ tends to infinity, $T(M)\rightarrow\infty$ and $T(M)\sim\displaystyle{\frac{2^{\binom{M}{2}}}{M!}}$ hold.

The probability $P(M)$ that a tournament with $M$ nodes is irreducible can be computed recursively by the formula
$$
P(M)=1-\sum_{i=1}^{M-1}\binom{M}{i}\frac{P(i)}{2^{t(M-t)}}.
$$
The values of $P(M)$ for $M\leqslant16$ are given in Table~\ref{tab:prob_irred_tournament}.
\begin{table}[t]
  \begin{center}
  \begin{small}
  \begin{tabular}{r@{\extracolsep{10pt}}lcr@{\extracolsep{10pt}}lcr@{\extracolsep{10pt}}lcr@{\extracolsep{10pt}}l}
  \toprule
  $M$ &   $P(M)$ & & $M$ &   $P(M)$ & & $M$ &   $P(M)$ & & $M$ &   $P(M)$ \\
  \cmidrule{1-2}\cmidrule{4-5}\cmidrule{7-8}\cmidrule{10-11}
  1 & 1     & & 5 & 0.53125  & &  9 & 0.931702 & & 13 & 0.993671 \\
  2 & 0     & & 6 & 0.681152 & & 10 & 0.961589 & & 14 & 0.996587 \\
  3 & 0.25  & & 7 & 0.799889 & & 11 & 0.978720 & & 15 & 0.998171 \\
  4 & 0.375 & & 8 & 0.881115 & & 12 & 0.988343 & & 16 & 0.999024 \\
  \bottomrule
  \end{tabular}
  \end{small}
  \end{center}
  \caption{The probability that a tournament is irreducible.}
  \label{tab:prob_irred_tournament}
\end{table}
As $M$ tends to infinity, $P(M)\rightarrow1$ and $P(M)\sim1-\displaystyle{\frac{M}{2^{M-2}}}$ hold.

\section{Definitions and structure of the model}
\label{sect:model}

\paragraph{Social decision rules}

Consider a population of $\nu$ {\em agents}. Each agent $i$ is characterized by a 
{\em system of transitive preferences} $\succeq_i$ over the set of
social outcomes $X$.
The set of systems of transitive preferences $\succeq$ is denoted by $\Cal P$.
A {\em social decision rule} $\Cal R$ is a function:
\begin{equation*}
\begin{matrix}
\Cal R: & \Cal P^{\nu} & \longrightarrow & \overline{\Cal P} \\
 & (\succeq_1 ,\dotsc,\succeq_\nu) & \longmapsto & \succeq_{\Cal R (\succeq_1
,\dotsc,\succeq_\nu)}
\end{matrix}
\end{equation*}
which determines a {\em system of social preferences} or {\em social rule} $\succeq_{\Cal R
  (\succeq_1 ,\dotsc,\succeq_\nu)}$ from the preferences of $\nu$
individual agents.
With $\overline{\Cal P}$ we denote the set of systems of (non-necessarily transitive) social preferences; as a matter of fact, we note that the social rule $\succeq_{\Cal R (\succeq_1 ,\dotsc,\succeq_\nu)}$ is not, in general, transitive anymore.

If $\Delta$ is the diagonal of the cartesian product $X \times X$,
the element $\succeq_{\Cal R} \in \overline{\Cal P}$ defines a subset
\begin{equation*}
\Cal{Y}_{1,\succeq_{\Cal R}}=\{ (x,y)\in X\times X \setminus \Delta \mid 
x \succeq_{\Cal R} y \}
\end{equation*}
and the set of {\em relevant} social outcomes
\begin{equation*}
\Cal{Y}_{0,\succeq_{\Cal R}}=\{x \in X \mid \exists y\in\confspace\ \text{such that}\ (x,y) \in
\Cal{Y}_{1,\succeq{\Cal R}}\ \text{or}\ (y,x) \in \Cal{Y}_{1,\succeq_{\Cal R}} \}.
\end{equation*}
If $\Cal{Y}_{0,\succeq_{\Cal R}}$ is the whole $X$, the social rule is said to be {\em complete}.
A complete social rule is said to be {\em strict} if for each pair of social outcomes $x$ and $y$ the two conditions $x \succeq_{\Cal R} y$ and $y \succeq_{\Cal R} x$ are mutually exclusive (i.e.~either the social outcome $x$ is preferred to the social outcome $y$ or the converse holds).
For the sake of simplicity, we will consider only strict social rules.
This restriction is almost always unnecessary, but it simplifies both the investigation and the presentation.
Therefore, from now on, $\succ$ will always denote a complete strict social rule; unless explicitly stated, it will be considered as fixed.
For the sake of shortness, we will always drop the words ``complete'' and ``strict''.

\paragraph{The graph}
The sets $\Cal{Y}_{0,\succ}$ and $\Cal{Y}_{1,\succ }$ are, respectively, the sets of nodes and arcs of a graph $\Cal{Y}_{\succ}$.
Two nodes $x$ and $y$ in $\Cal{Y}_{0,\succ}$ are connected by an 
arc if $(x,y) \in \Cal{Y}_{1,\succ}$ or 
$(y,x) \in \Cal{Y}_{1,\succ}$; the orientation is from $x$ 
to $y$ in the former case and from $y$ to $x$ in the latter.
For the sake of simplicity, we will use the same symbol $x$ for the nodes of $\Cal{Y}_{\succ}$ and $(x,y)$ for its arcs.
We decided to use different notations for graphs in regard of the relevance of different theories.
Note that the completeness assumption on social rules guarantees that the graph $\Cal{Y}_{\succ}$ is connected.

A cycle
\begin{equation*}
(x_1,x_2),(x_2,x_3),\dotsc ,(x_h,x_1)
\end{equation*}
in the graph $\Cal{Y}_{\succ}$ corresponds to a
cycle {\em \`{a} la} Con\-dor\-cet-Arrow~\cite{Condorcet-Arrow}, i.e.~to the sequence
\begin{equation*}
x_1 \succ x_2 \succ \dotsb \succ x_h \succ x_1.
\end{equation*}

\paragraph{Features}
Let $F=\{f_1,\dotsc ,f_n\}$ be a bundle of elements, said {\em features}, the $i$-th of which takes $m_i$ values, i.e.~$f_i \in \{0,1,2,\dotsc, m_i-1\}$ with $i=1,\dotsc,n$.
Denote by $m=(m_1,\dotsc,m_n)$ the multi-index of the numbers of values of the features.
From now on, a {\em social outcome} (or {\em configuration}) will be an $n$-uple $(v_1,\dotsc,v_n)$ of values such that $0\leqslant v_i<m_i$.
For the sake of shortness, it will be also denoted by $v_1\dotsm v_n$.
The set of all social outcomes will be denoted by $\confspace$.
The cardinality of $\confspace$ is $\prod_{i=1}^n m_i$ and will be denoted by $M$.

\paragraph{The hyperplane arrangement}
There is a correspondence~\cite{MarengoSette} between
the set $X$ of social outcomes and the set $\Cal{C}$ of the chambers of the 
arrangement
\begin{equation*}
\Cal A_{n,m}=\left\{H_{i,j} \mid \alpha_{H_{i,j}}=\lambda_i-j\right\}_{{1 \leqslant i \leqslant n} \atop {0 \leqslant j < m_i-1} }.
\end{equation*}
Namely, $x=v_1\dotsm v_n$ corresponds to the chamber $C$ that contains the open 
set
\begin{equation*}
\left\{(\lambda_1,\dotsc ,\lambda_n) \in \R^n \mid v_j-1 < \lambda_j < v_j,\ j=1, \dotsc ,n\right\}.
\end{equation*}

\paragraph{Salvetti's complex}
There is a correspondence~\cite{MarengoSette} between the oriented graph $\Cal{Y}_{\succ}$ and a subcomplex of the $1$-skeleton of Salvetti's complex $\Cal{S}(\Cal A_{n,m})$ as follows.
Namely, there is a one-to-one correspondence between the $0$-skeleton $\Cal{S}_0(\Cal A_{n,m})$ and the set of chambers in $\Cal A_{n,m}$, i.e.~the set of social outcomes $X$ by means of the correspondence above.
The generators of the $1$-skeleton can be described as
$$
\Cal{S}_1(\Cal A_{n,m})=\{(x,y) \in X \times X \setminus \Delta \mid x\ \text{and}\ y\ \text{are adjacent}\}
$$
where two chambers $C$ and $D$ are said to be {\em adjacent} if they 
are separated by only one hyperplane.

Given a subset of consecutive elements
$$
\{(x_1,x_2),(x_2,x_3),\dotsc, (x_{k-2},x_{k-1}),(x_{k-1},x_k)\}
$$
in $\Cal{S}_1(\Cal A_{n,m})$ their {\em formal sum} is
$$
(x_1,x_k)=\sum_{j=1}^{k-1} (x_j,x_{j+1}).
$$
It follows that given a social rule $\succ$ any arc $(x,y) \in \Cal{Y}_{1,\succ}$ can be written as a formal sum of a minimal number of consecutive elements in $\Cal{S}_1(\Cal A_{n,m})$. The number of elements is exactly the number of hyperplanes that separate the two social outcomes $x,y\in\confspace$.

Let $(x,y) \in \Cal{Y}_{1,\succ}$ be an arc given by a formal sum
with coefficients $1$ of arcs that are in $\Cal{Y}_{1,\succ}$.
If the social rule is transitive the arc can be deleted, because it can be reconstructed by means of the other arcs.

\begin{rem}
Saari has greatly contributed to establishing
general geometric representations of voting models and voting
paradoxes~\cite{saari94,saari00a,saari00b}.
Salvetti's complex is a CW-complex in $\CC^n$, but it has an underlying real
structure which is a purely simplicial complex.
Moreover, vertices in
this complex can be freely chosen inside each chamber.
This structure can be used in order to recast and
generalize some existing geometric models of voting such as those
provided by Saari.
\end{rem}

\paragraph{Objects schemes}
Given a non-empty subset $I\subseteq\{1,\dotsc, n\}$, the {\em object} $\Cal{A}_I$ is the subset
\begin{equation*}
\Cal{A}_I=\{H_{i,j}\}_{{i \in I} \atop {0 \leqslant j < m_i-1}}
\end{equation*}
of the arrangement $\Cal A_{n,m}$.
The cardinality of $\Cal{A}_I$ is called {\em size} of the object 
$\Cal{A}_I$ and is denoted by $|\Cal{A}_I|$.
The complement of a set $I$ in $\{1,\dotsc, n\}$ will be denoted (as usual) by $I^c$, hence the complement $\Cal{A}_I^c=\Cal A_{n,m} \setminus \Cal{A}_I$ of the arrangement $\Cal{A}_I$ in $\Cal A_{n,m}$ turns out to equal $\Cal{A}_{I^c}$.
The {\em object instantiation} $x(\Cal{A}_I)$ of a social outcome $x$ is
the chamber of the subarrangement $\Cal{A}_I$ that contains the chamber 
corresponding to $x$.

An {\em objects scheme} is a set of objects $A=\{\Cal{A}_{I_1},\dotsc, \Cal{A}_{I_k}\}$ such that $\bigcup_{j=1}^k I_j=\{1,\dotsc,n\}$. 
Note that the sets $I_j$ may have non-empty intersection.
The {\em size} of an objects scheme is the size of its largest object,
$$
| A | = \mbox{max}\{| \Cal{A}_{I_1} | ,\dotsc,| \Cal{A}_{I_k} | \}.
$$

\paragraph{Neighbors of a social outcome}
Let be given an objects scheme $A=\{\Cal{A}_{I_1},\dotsc, \Cal{A}_{I_k}\}$.
A social outcome $y$ is said to be a {\em preferred neighbor} of a social outcome $x$ with respect to an object $\Cal{A}_{I_h} \in A$ if the following conditions hold:
\begin{enumerate}[1)]\label{vicini}
\item $y \succ  x$,
\item $y(\Cal{A}_{I_h^c})=x(\Cal{A}_{I_h^c})$, i.e.~$x$ and $y$ belong to the same chamber of the arrangement $\Cal{A}_{I_h^c}$,
\item\label{cond:def_pref_neigh} $y(\Cal{A}_{I_h}) \neq x(\Cal{A}_{I_h})$, i.e.~$x$ and $y$ belong to different chambers of the arrangement $\Cal{A}_{I_h}$.
\end{enumerate}
Note that Condition~\ref{cond:def_pref_neigh} is a direct consequence of the first two, but we have left it for the sake of consistency with the non-strict case.
The set of all preferred neighbors of the social outcome $x$ with respect to $\Cal{A}_{I_h} \in A$ is denoted by $\Phi(x,\Cal{A}_{I_h})$.
The set of all preferred neighbors of the social outcome $x$ is denoted by $\Phi(x,A)=\bigcup_{j=1}^k \Phi(x,\Cal{A}_{I_j})$.

A social outcome $y\in\Phi(x,\Cal{A}_{I_h})$ is said to be a {\em best neighbor} of a social outcome $x$ with respect to an object $\Cal{A}_{I_h} \in A$ if
$$
y \succ w \quad \forall w \in \Phi(x,\Cal{A}_{I_h}).
$$
The set of all best neighbors of the social outcome $x$ with respect to $\Cal{A}_{I_h} \in A$ is denoted by $B(x,\Cal{A}_{I_h})$.
\begin{rem}
Obviously, $B(x,\Cal{A}_{I_h}) \subseteq \Phi(x,\Cal{A}_{I_h})$ holds.
Moreover, either $B(x,\Cal{A}_{I_h})$ is empty, or $B(x,\Cal{A}_{I_h})$ contains one social outcome only.
Even if this notation seems to be useless, we use it to follow the literature-customary convention; indeed, if one takes also non-strict social rules into account, the set $B(x,\Cal{A}_{I_h})$ may contain more than one social outcome.
\end{rem}
The set of all best neighbors of the social outcome $x$ is denoted by $B(x,A)=\bigcup_{j=1}^k B(x,\Cal{A}_{I_j})$.

A {\em domination path $DP(x,y,A) $ through $A$, starting from $x$ and ending in 
$y$}, is a sequence of best neighbors with respect to objects in $A$, 
i.e.~a sequence
$$
x=x_0 \prec x_1 \prec \dotsb \prec x_s=y
$$
such that there exist objects $\Cal{A}_{I_{h_1}}, \dotsc, \Cal{A}_{I_{h_s}} \in A$ with $x_i \in B(x_{i-1},\Cal{A}_{I_{h_i}})$ for all $1 \leqslant i \leqslant s$.

A social outcome $y$ is said to be {\em reachable from $x$ with respect to an objects scheme $A$} if there exists a domination path $DP(x,y,A)$.
A social outcome $x$ is said to be a {\em local optimum for $A$} if $\Phi(x,A)$ is empty.

\paragraph{Agenda}
Let $A=\{\Cal{A}_{I_1},\dotsc,\Cal{A}_{I_k}\}$ be an objects scheme.
An {\em agenda $\alpha$ of $A$} is an ordered $t$-uple of indices $(h_1,\dotsc,h_t)$ with $t\geq k$ such that $\{h_1,\dotsc, h_t\} = \{1,\dotsc, k\}$.
An agenda $\alpha$ states the order in which the objects $\Cal{A}_{I_i}$ are decided upon.
In the model of~\cite{MarengoSette} the agenda is repeated over and over again 
until either a local optimum or a domination cycle is reached.
The ordered $t$-uple of objects $(\Cal{A}_{I_{h_1}},\dotsc,\Cal{A}_{I_{h_t}})$ is denoted by $A_{\alpha}$.
The set of all possible agendas of $A$ is denoted by $\Lambda(A)$.

Let $\alpha = (h_1,\dotsc,h_t)$ be an agenda.
A domination path
$$
x_0 \prec x_1 \prec \dotsb \prec x_s
$$
is said to be {\em ordered along $\alpha$} if
$$
x_i \in B(x_{i-1},\Cal{A}_{I_{h_q+1}})
$$
where $h_q$ is the remainder of the division of $i-1$ by $t$.
Such a domination path will be denoted by $DP(x_0,x_s,A_{\alpha})$.

A domination path is said to be {\em maximal} if it ends in either a local optimum or a limit domination cycle.
More precisely, either $x_s$ is a local optimum or $x_{s-t}$ belongs to $B(x_s,\Cal{A}_{I_{h_s+1}})$, where $h_s$ is the remainder of the division of $s-1$ by $t$.
Note that in the first case we do not require that $x_{s-1}$ is different from $x_s$, so there is no control on the number of times that $x_s$ appears at the end of the domination path.
Also in the second case, there is no control on the number of times that the domination cycle 
$$
x_{s-t} \prec \dotsb \prec x_s
$$
appears at the end of the domination path.
In the first case, we will say that the domination path {\em ends up in $x_s$}.

\paragraph{Basin of attraction}
The {\em basin of attraction $\Psi (x,A)$ of a social outcome $x$ with respect to an objects scheme $A$} is 
the set of the social outcomes $y$ such that there exists a maximal
domination path $DP(y,x,A)$ that ends up in $x$.
\begin{rem}\label{rem:basin_local}
Note that $\Psi (x,A)$ is empty if and only if $x$ is not a local optimum for $A$.
\end{rem}

The {\em ordered basin of attraction $\Psi(x,A_{\alpha})$ of $x$
  with respect to an agenda $\alpha$ of $A$} is the set of the social
outcomes $y$ such that there exists a maximal domination path
$DP(y,x,A_{\alpha})$ that ends up in $x$.
Clearly, we have
$$
\Psi (x,A)= \bigcup_{\alpha \in \Lambda(A)} \Psi (x,A_{\alpha}).
$$

\paragraph{Global optima}
A social outcome $z \in X$ is said to be a {\em global optimum for an agenda $\alpha$} if $\Psi(z,A_{\alpha})=X$ holds.
It is said to be a {\em global optimum for the objects scheme $A$} if and only if $\Psi(z,A_{\alpha})=X$ holds for all agendas $\alpha \in \Lambda(A)$, i.e.~it is a global optimum for all the agendas of $A$.

Local and global optima strictly depend on the choice of the objects scheme $A$.
In~\cite{MarengoSette} the authors prove that object construction power is, 
in some sense, stronger than agenda power, i.e.~they prove
$$
\Psi (z,A) \neq \emptyset \quad \Longleftrightarrow \quad
\Psi (z,A_{\alpha}) \neq \emptyset\ \text{for all}\ \alpha \in \Lambda(A).
$$

\paragraph{Separating hyperplanes and distance between social outcomes}
Let $x$ and $y$ be two social outcomes.
They are said to be {\em separated by an hyperplane $H \in \Cal A_{n,m}$} if $H$ separates the chambers $C_x$ and $C_y$.
In this case, the notation $x \mid H \mid y$ will be used.
Moreover, $x$ and $y$ are said to be {\em prominently separated} if there 
exist two hyperplanes $H_{i_1,j_1}, H_{i_2,j_2} \in \Cal A_{n,m}$ with $i_1 \neq i_2$ (i.e.~non-parallel) such that $x \mid H_{i_1,j_1} \mid y$ and $x \mid H_{i_2,j_2} \mid y$ hold.
We will say that $x$ and $y$ are {\em separated by the feature} $f$ if the value of the feature $f$ of $y$ differs from that of the feature $f$ of $x$.
The set of the features that separates $x$ and $y$ is denoted by $\overline{\Cal{H}}_{x,y}$.

The {\em distance} between $x$ and $y$ is the minimum number of hyperplanes that separate $x$ and $y$.
The {\em prominent distance} $d_p(x,y)$ is the number of features that separate $x$ and $y$, i.e.~$\#\overline{\Cal{H}}_{x,y}$.
Note that $d_p(x,y)$ equals the minimum number of hyperplanes that prominently separate $x$ and $y$.

Recall that, by definition, if $H_{i,\bar{\jmath}}$ belongs to the object $\Cal{A}_I$ for some $\bar{\jmath}$, then $H_{i,j}$ belongs to $\Cal{A}_I$ for all $0 \leqslant j < m_i-1$.
Therefore, the subarrangement
$$
\Cal{H}_{x,y}=\left\{H_{i,j}\in\Cal{A}_{n,m} \mid i\in\overline{\Cal{H}}_{x,y},\ 0 \leqslant j < m_i-1\right\}
$$
of $\Cal{A}_{n,m}$ has been considered.
Note that, if we have $d_p(x,y)=1$ and $d(x,y)>1$, all the hyperplanes in $\Cal{H}_{x,y}$ are parallel.
\begin{rem}
The sets $\Cal{H}_{x,y}$ and $\overline{\Cal{H}}_{x,y}$ are strictly interconnected.
For instance, we will use the fact that $\Cal{H}_{x,y}$ is contained in $\Cal{H}_{z,w}$ if and only if 
$\overline{\Cal{H}}_{x,y}$ is contained in $\overline{\Cal{H}}_{z,w}$.
\end{rem}

In~\cite{MarengoSette} the authors prove the following result.
\begin{teo}\label{principio}
Let $z$ be a social outcome.
Then, there exists an objects scheme $A_z$ for which $z$ is a local optimum if and only if the inequality $d_p(w,z)>1$ holds for any social outcome $w$ with $w\succ z$.
\end{teo}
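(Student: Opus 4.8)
The plan is to translate the combinatorial content of ``$z$ is a local optimum for $A$'' into a statement about the feature sets $\featuresep{z,w}$, and then to exploit the covering condition $\bigcup_j I_j=\{1,\dotsc,n\}$ built into the definition of an objects scheme. The starting observation is that a social outcome $w$ is a preferred neighbor of $z$ with respect to an object $\Cal{A}_{I_h}$ precisely when $w\succ z$ and $\emptyset\neq\featuresep{z,w}\subseteq I_h$: condition~2) in the definition says $z$ and $w$ agree on every feature outside $I_h$, while condition~3) says they differ on at least one feature of $I_h$. Hence $z$ fails to be a local optimum for $A$ if and only if there is some $w\succ z$ and some object $\Cal{A}_{I_h}\in A$ with $\featuresep{z,w}\subseteq I_h$. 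Since $\succ$ is strict, $w\succ z$ forces $w\neq z$, so $\featuresep{z,w}\neq\emptyset$ automatically and $d_p(w,z)=\#\featuresep{z,w}\geq1$.

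For the ``if'' direction I would construct the finest possible scheme. Assuming $d_p(w,z)>1$ for every $w\succ z$, set $A_z=\{\Cal{A}_{\{1\}},\dotsc,\Cal{A}_{\{n\}}\}$, the scheme of all singleton objects; this is a legitimate objects scheme because $\bigcup_{i=1}^n\{i\}=\{1,\dotsc,n\}$. A preferred neighbor of $z$ with respect to $\Cal{A}_{\{i\}}$ would be some $w\succ z$ with $\emptyset\neq\featuresep{z,w}\subseteq\{i\}$, i.e.\ with $d_p(w,z)=1$; by hypothesis no such $w$ exists. Thus $\Phi(z,\Cal{A}_{\{i\}})$ is empty for every $i$, so $\Phi(z,A_z)$ is empty and $z$ is a local optimum for $A_z$.

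For the ``only if'' direction I would argue by contradiction using the covering property. Suppose some scheme $A_z$ makes $z$ a local optimum, yet there is a $w\succ z$ with $d_p(w,z)\leq1$; by the strictness remark above this forces $d_p(w,z)=1$, say $\featuresep{z,w}=\{i\}$. Because $A_z$ is a scheme, its index sets cover $\{1,\dotsc,n\}$, so there is an object $\Cal{A}_{I_h}\in A_z$ with $i\in I_h$, whence $\featuresep{z,w}=\{i\}\subseteq I_h$. Then $w$ is a preferred neighbor of $z$ with respect to $\Cal{A}_{I_h}$, contradicting that $\Phi(z,A_z)$ is empty. Hence $d_p(w,z)>1$ for all $w\succ z$.

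I do not expect a genuine obstacle here: once the preferred-neighbor condition is rephrased as $\featuresep{z,w}\subseteq I_h$, both implications are short. The only points requiring care are (i) checking that the singleton scheme really satisfies the covering axiom, and (ii) using strictness of $\succ$ to rule out $d_p(w,z)=0$, so that the negation of ``$d_p(w,z)>1$'' is exactly ``$d_p(w,z)=1$''. The conceptual heart is simply that the prominent distance counts the features one must change, objects can only change features inside their own index set, and every feature lies in some object of any scheme.
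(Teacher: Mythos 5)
Your proof is correct and complete. There is nothing in the paper to compare it against: Theorem~\ref{principio} is stated here as a result imported from~\cite{MarengoSette} (``In~\cite{MarengoSette} the authors prove the following result''), so the paper contains no proof of its own. Your dictionary --- $w\in\Phi(z,\Cal{A}_{I_h})$ if and only if $w\succ z$ and $\emptyset\neq\featuresep{z,w}\subseteq I_h$ --- is the right translation of the preferred-neighbor conditions into feature sets, and both directions are sound: the singleton scheme $\{\Cal{A}_{\{1\}},\dotsc,\Cal{A}_{\{n\}}\}$ witnesses sufficiency, and the covering axiom $\bigcup_j I_j=\{1,\dotsc,n\}$ forces necessity, with strictness of $\succ$ correctly ruling out $d_p(w,z)=0$. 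As a bonus, your argument shows that $z$ is a local optimum for \emph{some} objects scheme exactly when it is one for the all-singletons scheme, which is a clean way to phrase the equivalence with the paper's notion of a \emph{free} social outcome.
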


The previous theorem explains also the reason of the choice of the name ``local optimum''.
Namely, a social outcome $z$ is a local optimum for an objects scheme $A$ if and only if any social outcome $x$ such that $d_p(x,z)=1$ belongs to $\Psi(z,A)$.

A social outcome $z$ is said to be {\em free} if and only if the inequality $d_p(w,z)>1$ holds for any social outcome $w$ with $w\succ z$. Thus, by means of Theorem~\ref{principio}, we have that $z$ is a local optimum for an objects scheme $A_z$ if and only if $z$ is free.

\begin{problem}
An interesting question, pointed out in~\cite{MarengoSette}, is to understand when a local optimum $z$ for an objects scheme $A$ is a global optimum, i.e.~when there exists an agenda $\alpha$ of $A$ such that the basin of attraction $\Psi(z,A_{\alpha})$ is the whole $X$ and when this is true for all agendas $\alpha \in \Lambda(A)$.
\end{problem}

In~\cite{MarengoSette} the authors prove the following.
\begin{teo}\label{pglob1}
Let $z$ be a free social outcome.
Then, there exists an objects scheme $A_z$ such that $\Phi(z,A_z)= \emptyset$ and $\Phi(x,A_z) \neq \emptyset$ for all free social outcomes $x$ if and only if the condition
\begin{equation}\label{eq:cond_hyper}
\exists y \succ x\ \text{such that}\ \Cal{H}_{w,z} \nsubseteq \Cal{H}_{x,y} \quad \forall
w \succ z
\end{equation}
holds for all free $x$.
\end{teo}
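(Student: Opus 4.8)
The plan is to recast both the objects-scheme condition and the hyperplane condition~\eqref{eq:cond_hyper} in terms of the separating feature sets $\overline{\Cal{H}}_{x,y}$, and then to read the desired scheme off of~\eqref{eq:cond_hyper} almost directly. First I would record the elementary dictionary that drives everything: unwinding the three defining conditions of a preferred neighbor, a social outcome $y$ lies in $\Phi(x,\Cal{A}_{I})$ if and only if $y\succ x$ and $\emptyset\neq\overline{\Cal{H}}_{x,y}\subseteq I$ (condition~2 gives $\overline{\Cal{H}}_{x,y}\subseteq I$, and conditions~1 and~3 give $y\succ x$ and $\overline{\Cal{H}}_{x,y}\neq\emptyset$). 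Consequently $\Phi(x,A)\neq\emptyset$ exactly when some object $I$ of $A$ admits a $y\succ x$ with $\overline{\Cal{H}}_{x,y}\subseteq I$, whereas $x$ is a local optimum for $A$ exactly when no object of $A$ does. Throughout I read ``for all free $x$'' as ``for all free $x\neq z$'', since for $x=z$ both sides of the stated equivalence degenerate (the choice $w=y$ forces $\Cal{H}_{y,z}\subseteq\Cal{H}_{z,y}$, and $\Phi(z,A_z)=\emptyset$ contradicts $\Phi(z,A_z)\neq\emptyset$). Using the remark that $\Cal{H}_{w,z}\subseteq\Cal{H}_{x,y}$ is equivalent to $\overline{\Cal{H}}_{w,z}\subseteq\overline{\Cal{H}}_{x,y}$, condition~\eqref{eq:cond_hyper} becomes: for every free $x\neq z$ there is $y\succ x$ such that no $w\succ z$ satisfies $\overline{\Cal{H}}_{w,z}\subseteq\overline{\Cal{H}}_{x,y}$.

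For the ``if'' direction I would build $A_z$ explicitly. For each free $x\neq z$ choose a witness $y_x\succ x$ as in the reformulated condition and set $S_x\duepuntiuguale\overline{\Cal{H}}_{x,y_x}$, giving an object $\Cal{A}_{S_x}$; to meet the covering requirement $\bigcup_j I_j=\{1,\dotsc,n\}$ I would throw in the singleton objects $\Cal{A}_{\{i\}}$ for every feature $i$ not yet covered, and declare $A_z$ to be this whole (finite) collection. That $z$ is a local optimum is checked object by object: for the singletons, freeness of $z$ says $d_p(w,z)>1$ for every $w\succ z$, so no such $w$ has $\overline{\Cal{H}}_{w,z}\subseteq\{i\}$; for each $\Cal{A}_{S_x}$, the defining property of $S_x$ says no $w\succ z$ has $\overline{\Cal{H}}_{w,z}\subseteq S_x$. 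Hence $\Phi(z,\Cal{A}_{I})=\emptyset$ for every object $I$ of $A_z$, i.e.\ $\Phi(z,A_z)=\emptyset$. Conversely each free $x\neq z$ fails to be a local optimum because $y_x\in\Phi(x,\Cal{A}_{S_x})\subseteq\Phi(x,A_z)$, since $\overline{\Cal{H}}_{x,y_x}=S_x\subseteq S_x$ and $y_x\succ x$.

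For the ``only if'' direction I would start from a scheme $A_z$ with $\Phi(z,A_z)=\emptyset$ and $\Phi(x,A_z)\neq\emptyset$ for all free $x\neq z$, and produce the witness required by~\eqref{eq:cond_hyper}. Fix a free $x\neq z$. Since $\Phi(x,A_z)\neq\emptyset$, there is an object $\Cal{A}_{I_h}\in A_z$ and a $y\in\Phi(x,\Cal{A}_{I_h})$, so $y\succ x$ and $\emptyset\neq\overline{\Cal{H}}_{x,y}\subseteq I_h$. I claim this $y$ is the desired witness. If not, some $w\succ z$ would satisfy $\overline{\Cal{H}}_{w,z}\subseteq\overline{\Cal{H}}_{x,y}\subseteq I_h$; since $w\succ z$ forces $w\neq z$ and hence $\overline{\Cal{H}}_{w,z}\neq\emptyset$, this would give $w\in\Phi(z,\Cal{A}_{I_h})$, contradicting $\Phi(z,A_z)=\emptyset$. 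Translating back through the remark, no $w\succ z$ has $\Cal{H}_{w,z}\subseteq\Cal{H}_{x,y}$, which is exactly~\eqref{eq:cond_hyper} for $x$.

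The conceptual content is light once the dictionary is in place; the step that needs the most care is the covering clause in the ``if'' direction. The collection $\{\Cal{A}_{S_x}\}$ is engineered to keep $z$ optimal and to unseat every other free outcome, but it need not cover all features, and an arbitrary padding object could reintroduce a preferred neighbor of $z$ and destroy its optimality. The point is that padding with singletons is safe precisely because $z$ is free, so I expect the main obstacle to be verifying (rather than guessing) that singletons are the harmless completion, and that enlarging the scheme never removes the preferred neighbors $y_x$ already exhibited.
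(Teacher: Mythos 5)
Your proof is correct. One point of comparison to flag first: this paper does not actually prove Theorem~\ref{pglob1} --- it is quoted from~\cite{MarengoSette} ("In~\cite{MarengoSette} the authors prove the following"), so there is no in-paper argument to measure you against; your proposal stands as a self-contained proof. The dictionary you start from is exactly right: $y\in\Phi(x,\Cal{A}_I)$ iff $y\succ x$ and $\overline{\Cal{H}}_{x,y}\subseteq I$, since condition~2 of the definition says no hyperplane of $\Cal{A}_{I^c}$ separates $x$ from $y$, which is precisely $\overline{\Cal{H}}_{x,y}\subseteq I$, and condition~3 is redundant (as the paper itself remarks). With that, the ``only if'' direction is the one-line contradiction you give, and the ``if'' direction reduces to your construction $A_z=\{\Cal{A}_{S_x}\}_x\cup\{\Cal{A}_{\{i\}}\}$. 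Your treatment of the two delicate points is also sound: (a) the padding singletons are harmless for $z$ exactly because freeness says $d_p(w,z)>1$ for all $w\succ z$, so no $w\succ z$ has $\overline{\Cal{H}}_{w,z}\subseteq\{i\}$, and since $\Phi(z,A)=\bigcup_j\Phi(z,\Cal{A}_{I_j})$, enlarging a scheme can only add preferred neighbors, never remove the witnesses $y_x$ already secured; (b) the statement must be read with $x\neq z$, since for $x=z$ the left side is self-contradictory and the right side fails by taking $w=y$ (using $\Cal{H}_{y,z}=\Cal{H}_{z,y}$), so both sides degenerate and the intended content is the restricted quantifier --- your reading matches how the theorem is used later in the paper (e.g.\ in the remark characterizing $G^z_x\neq\emptyset$ via Condition~\eqref{eq:cond_hyper}).
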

The equivalent conditions on the free social outcome $z$ above are necessary for $z$ to be a global optimum.

\paragraph{Universal basin of attraction and u-local optima}
Let $\Pi(\Cal{A}_{n,m})$ be the set of all possible objects schemes in $\Cal{A}_{n,m}$.
The {\em universal basin of attraction} of a social outcome $z \in X$ is the set 
$$
\Psi(z) = \bigcup_{A \in \Pi(\Cal{A}_{n,m})} \Psi(z,A),
$$
i.e.~the set of all the social outcomes $x$ such that there exists an objects scheme through which there is a domination path starting from $x$ and ending up in $z$.

By virtue of Remark~\ref{rem:basin_local} and Theorem~\ref{principio}, the universal basin of attraction of the social outcome $z$ is non-empty if and only if $z$ is {\em free}.
\begin{defi}
A social outcome $z$ is said to be an {\em u-local optimum} if its universal basin of attraction $\Psi(z)$ is the whole set of social outcomes $\confspace$.
\end{defi}

\begin{rem}\label{rem:glob_uloc_loc}
A global optimum is necessarily an u-local optimum, and an u-local optimum is necessarily a local optimum for at least one objects scheme.
\end{rem}

Let $x$ and $z$ be social outcomes.
In~\cite{MarengoSette}, when $z$ is free, the authors consider the set
$$
G_x^z=\{y \succ x \mid \Cal{H}_{w,z} \nsubseteq \Cal{H}_{x,y} \quad \forall w \succ z \mbox { and } B(x, \Cal{H}_{x,y}) \neq \emptyset \}
$$
and prove that if $x$ is in the universal basin of attraction of $z$ then $G_x^z \neq \emptyset$.
For the sake of completeness, we will define $G_x^z$ to be $\emptyset$ if $z$ is not free.
\begin{rem}
Suppose $z$ is free.
The set $G^z_x$ is non-empty if and only if there exists an objects
scheme $\scheme_z$ such that $\Phi(z,\scheme_z)=\emptyset$ and
$\Phi(x,\scheme_z)\neq\emptyset$ hold; i.e. if and only if $x$ satisfies Condition~\eqref{eq:cond_hyper} of Theorem~\ref{pglob1}.
\end{rem}

Suppose now that $x$ is a social outcome such that $G_x^z$ is non-empty (so $z$ is free).
If $B(x,\Cal{H}_{x,y})$ is non-empty, its cardinality is one.
The only element of $B(x,\Cal{H}_{x,y})$ will be denoted by $b_{x,y}$.
In~\cite{MarengoSette} the authors consider the set
$$
BG^z_x=\{b_{x,y} \mid y \in G_x^z\} \subseteq G_x^z,
$$
the sets
\begin{align*}
E_0^z &=\{z\}, \\
E_1^z &=\{x \in X \setminus \{z\} \mid z \in BG_x^z\}, \\
E_2^z &=\{ x \in X \setminus \cup_{i=0}^1 E_i^z \mid E_1^z \cap BG_x^z \neq\emptyset\}, \\
& \quad\quad\quad\quad\quad\quad\quad\vdots \\
E_h^z &=\{ x \in X \setminus \cup_{i=0}^{h-1}E_i^z \mid
E_{h-1}^z \cap BG_x^z \neq \emptyset\}, \\
E_{h+1}^z &=\{ x \in X \setminus \cup_{i=0}^{h}E_i^z \mid
E_{h}^z \cap BG_x^z \neq \emptyset\} =\emptyset
\end{align*}
(where $h$ is the smallest integer such that $E_{h+1}^z$ is empty), and the set
$$
E^z=\bigcup_{i=1}^h E_i^z.
$$
For the sake of completeness, we define all these sets to be empty if $z$ is not free.
They prove the following theorem.
\begin{teo}\label{teo:universal_basin}
Let $x$ and $z$ be two social outcomes.
Then $x$ is in the universal basin of attraction $\Psi(z)$ if and only if $x$ belongs to $E^z$, i.e.
$$
\Psi(z)=E^z.
$$
\end{teo}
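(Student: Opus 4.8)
The plan is to prove the two inclusions $E^z\subseteq\Psi(z)$ and $\Psi(z)\subseteq E^z$ separately. I would first dispose of the degenerate case: when $z$ is not free both sides are empty (the left-hand side by the non-emptiness criterion for the universal basin obtained from Remark~\ref{rem:basin_local} and Theorem~\ref{principio}, the right-hand side by the standing convention), so I assume throughout that $z$ is free. The guiding idea is that the layers $E_1^z,E_2^z,\dots$ perform a breadth-first search \emph{backwards} from $z$: $E_1^z$ collects the outcomes reaching $z$ in one best-neighbour step while keeping $z$ a local optimum, $E_2^z$ those reaching $E_1^z$ in one such step, and so on. A small observation I would isolate first, using the remark that $\Cal{H}_{x,y}\subseteq\Cal{H}_{z,w}$ iff $\overline{\Cal{H}}_{x,y}\subseteq\overline{\Cal{H}}_{z,w}$, is that $\Phi(z,\Cal{A}_I)=\emptyset$ is equivalent to $\Cal{H}_{w,z}\nsubseteq\Cal{A}_I$ for all $w\succ z$. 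The whole content then reduces to matching single admissible best-neighbour steps with membership in some $BG_{\cdot}^z$, and iterating.

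For $\Psi(z)\subseteq E^z$, let $x\in\Psi(z)$, so for some scheme $A$ there is a maximal domination path
$$
x=x_k\prec x_{k-1}\prec\dots\prec x_0=z
$$
ending up in $z$; in particular $z$ is a local optimum for $A$, i.e. $\Phi(z,\Cal{A}_{I})=\emptyset$ for every object $\Cal{A}_I\in A$. Writing $x_{j-1}\in B(x_j,\Cal{A}_{I_{h_j}})$ for the object used at the $j$-th step, the key local claim is that $x_{j-1}\in BG_{x_j}^z$, which I would establish by taking $y_j=x_{j-1}$: since $x_{j-1}$ differs from $x_j$ only on features in $I_{h_j}$ we have $\Cal{H}_{x_j,x_{j-1}}\subseteq\Cal{A}_{I_{h_j}}$, so $\Cal{H}_{w,z}\nsubseteq\Cal{A}_{I_{h_j}}$ (i.e. $\Phi(z,\Cal{A}_{I_{h_j}})=\emptyset$) forces $\Cal{H}_{w,z}\nsubseteq\Cal{H}_{x_j,x_{j-1}}$ for all $w\succ z$, and moreover $B(x_j,\Cal{H}_{x_j,x_{j-1}})=\{x_{j-1}\}$ because the preferred neighbours for the smaller object $\Cal{H}_{x_j,x_{j-1}}$ form a subset of those for $\Cal{A}_{I_{h_j}}$ that still contains the $\succ$-largest one, $x_{j-1}$. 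Hence $y_j\in G_{x_j}^z$ and $b_{x_j,y_j}=x_{j-1}$. A straightforward induction on the layer index then finishes: the base case $x_0=z\in BG_{x_1}^z$ gives $x_1\in E_1^z$, and if $x_{j-1}\in E_l^z$ with $x_{j-1}\in BG_{x_j}^z$ then either $x_j\in\bigcup_{i\le l}E_i^z$ already, or $x_j\in E_{l+1}^z$ by the definition of that layer; since $x_k=x\neq z$, we get $x\in E^z$.

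For the reverse inclusion $E^z\subseteq\Psi(z)$ I would run the construction backwards. If $x\in E_p^z$, unwinding the layer definitions yields a chain $x=w_p,w_{p-1},\dots,w_0=z$ with $w_{l-1}\in E_{l-1}^z\cap BG_{w_l}^z$, so $w_{l-1}=b_{w_l,y_l}=B(w_l,\Cal{H}_{w_l,y_l})$ with $w_{l-1}\succ w_l$ and $\Cal{H}_{w,z}\nsubseteq\Cal{H}_{w_l,y_l}$ for every $w\succ z$. I then set
$$
A=\bigl\{\Cal{H}_{w_l,y_l}\mid 1\le l\le p\bigr\}\cup\bigl\{\Cal{A}_{\{i\}}\mid i\ \text{not covered by the objects above}\bigr\}.
$$
This is a legitimate objects scheme, and $z$ is a local optimum for it: each $\Cal{H}_{w_l,y_l}$ gives $\Phi(z,\Cal{H}_{w_l,y_l})=\emptyset$ by the displayed non-containment, while each added singleton $\Cal{A}_{\{i\}}$ gives $\Phi(z,\Cal{A}_{\{i\}})=\emptyset$ \emph{precisely because $z$ is free}, since no $w\succ z$ has $d_p(w,z)=1$. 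Reading the chain as $w_p\prec\dots\prec w_0=z$ and using $\Cal{H}_{w_l,y_l}$ at the $l$-th step exhibits a maximal domination path ending up in the local optimum $z$, whence $x\in\Psi(z,A)\subseteq\Psi(z)$.

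I expect the main obstacle to be the object-monotonicity step $B(x_j,\Cal{H}_{x_j,x_{j-1}})=\{x_{j-1}\}$: the claim that shrinking an object from $\Cal{A}_{I_{h_j}}$ down to the separating sub-object $\Cal{H}_{x_j,x_{j-1}}$ leaves the best neighbour unchanged once that neighbour already lies in the smaller direction. This comparison of preferred-neighbour sets, together with the bookkeeping needed to keep $z$ a local optimum for \emph{every} object of the constructed scheme, is where the care lies. A secondary point is the precise handling of maximality and the phrase ``ends up in $z$'': I would work with the agenda-free maximal domination paths defining $\Psi(z,A)$ and invoke the stated identity $\Psi(x,A)=\bigcup_{\alpha\in\Lambda(A)}\Psi(x,A_{\alpha})$ to pass freely between the agenda-free and agenda-indexed formulations.
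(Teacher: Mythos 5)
There is nothing in this paper to compare your argument against: the theorem is imported from~\cite{MarengoSette} (``They prove the following theorem''), and no proof of it appears in the text. Judged against the paper's definitions, your proof is correct. The two hinge points both check out. First, your preliminary observation is exactly the right dictionary: $\Phi(z,\Cal{A}_I)=\emptyset$ iff no $w\succ z$ agrees with $z$ outside $I$, iff $\Cal{H}_{w,z}\nsubseteq\Cal{A}_I$ for all $w\succ z$, using the paper's remark that containment of the hyperplane sets is equivalent to containment of the feature sets. Second, the object-monotonicity step you flagged as the main obstacle does go through: since $x_{j-1}\in\Phi(x_j,\Cal{A}_{I_{h_j}})$, all features separating $x_j$ from $x_{j-1}$ lie in $I_{h_j}$, so $\Phi(x_j,\Cal{H}_{x_j,x_{j-1}})\subseteq\Phi(x_j,\Cal{A}_{I_{h_j}})$; the element $x_{j-1}$ lies in the smaller set and $\succ$-dominates the larger one, hence $B(x_j,\Cal{H}_{x_j,x_{j-1}})=\{x_{j-1}\}$ by strictness. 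With that, $x_{j-1}\in BG^z_{x_j}$, and your layer induction correctly places each $x_j$ in some $E^z_i$ (note that no $x_j$ with $j\geq 1$ can equal $z$, since $\Phi(z,A)=\emptyset$ forbids the path from ever leaving $z$). In the reverse direction, your scheme $A=\{\Cal{H}_{w_l,y_l}\}_l\cup\{\Cal{A}_{\{i\}}\}$ is legitimate, and the two verifications ($G^z$-membership kills the objects $\Cal{H}_{w_l,y_l}$, freeness of $z$ kills the singletons) are exactly what is needed; since the paper defines $\Psi(z,A)$ via agenda-free maximal domination paths and $z$ is a local optimum for every object of $A$, your chain is maximal and ends up in $z$, so $x\in\Psi(z,A)\subseteq\Psi(z)$. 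The only loose ends are ambiguities of the paper itself, not of your argument: whether $z$ belongs to $\Psi(z)$ (the paper's $E^z=\bigcup_{i\geq 1}E^z_i$ excludes $z$, while its algorithm includes $E^z_0$), and the imprecise definition of maximality for agenda-free paths, which you handle in the only reasonable way.
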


\begin{defi}
Let $z$ be a social outcome.
The {\em deepness} of a social outcome $x$ with respect to $z$ is
\begin{itemize}
\item
$d$ if $x$ belong to $E_d^z$,
\item
$\infty$ if $x$ does not belong to $\Psi(z)$.
\end{itemize}
\end{defi}
Note that this definition makes sense because the $E_*^z$'s form a partition of the universal basin of attraction of $z$.

\begin{prop}
The deepness of a social outcome $x$ with respect to $z$ is the minimum of the lengths of all maximal domination paths $DP(x,z,A_z)$, among all objects schemes $A_z$ such that $\Phi(z,A_z)$ is empty.
\end{prop}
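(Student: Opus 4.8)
The plan is to prove the two inequalities $\mathrm{deep}_z(x)\le L$ and $L\le\mathrm{deep}_z(x)$, where $\mathrm{deep}_z(x)$ denotes the deepness (the index $d$ with $x\in E_d^z$, or $\infty$) and $L$ denotes the minimum length over all maximal domination paths $DP(x,z,A_z)$ with $\Phi(z,A_z)=\emptyset$. The case $x\notin\Psi(z)$ is immediate: by Theorem~\ref{teo:universal_basin} we have $x\notin E^z$, and since $x\in\Psi(z)$ is equivalent to the existence of some scheme $A_z$ with $\Phi(z,A_z)=\emptyset$ carrying a maximal domination path from $x$ to $z$ (Remark~\ref{rem:basin_local}), the minimum is taken over the empty set and equals $\infty=\mathrm{deep}_z(x)$. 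From now on $z$ is free and $x\in\Psi(z)=E^z$. The single observation driving both inequalities is that the subarrangement $\Cal{H}_{x,y}$ is literally the object $\Cal{A}_{\overline{\Cal{H}}_{x,y}}$ (both equal $\{H_{i,j}\mid i\in\overline{\Cal{H}}_{x,y}\}$); hence every element of $BG_x^z$ is a best neighbor of $x$ with respect to an object, and conversely best-neighbor steps can be matched with $BG$-steps.

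For $L\le\mathrm{deep}_z(x)$, I would unfold the recursive definition of the layers. If $x\in E_d^z$ there is $x^{(1)}\in E_{d-1}^z\cap BG_x^z$, so $x^{(1)}=b_{x,y^{(1)}}$ with $y^{(1)}\in G_x^z$; iterating down the layers produces a chain $x\prec x^{(1)}\prec\dotsb\prec x^{(d)}=z$ in which each step $x^{(i)}\to x^{(i+1)}$ is a best-neighbor step for the object $\Cal{A}_{\overline{\Cal{H}}_{x^{(i)},y^{(i+1)}}}$. I would then take $A_z$ to be the union of these $d$ objects together with all singleton objects $\Cal{A}_{\{1\}},\dotsc,\Cal{A}_{\{n\}}$ (so that $\bigcup I_j=\{1,\dotsc,n\}$). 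That $\Phi(z,A_z)=\emptyset$ follows object by object: for a singleton object a preferred neighbor $w\succ z$ would force $d_p(w,z)=1$, impossible since $z$ is free (Theorem~\ref{principio}); for each path object the defining inequality of $G$, namely $\Cal{H}_{w,z}\nsubseteq\Cal{H}_{x^{(i)},y^{(i+1)}}$ for all $w\succ z$, says exactly that $z$ has no preferred neighbor through $\Cal{A}_{\overline{\Cal{H}}_{x^{(i)},y^{(i+1)}}}$. Thus the chain is a maximal domination path of length $d$ ending up in the local optimum $z$, giving $L\le d$.

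For $\mathrm{deep}_z(x)\le L$, I would start from an arbitrary maximal path $x=x_0\prec\dotsb\prec x_s=z$ realizing $z$ as a local optimum of $A_z$, and show that every step is a $BG$-step with the natural witness $y=x_{i+1}$. Since $x_{i+1}\in B(x_i,\Cal{A}_{I_h})$, the two outcomes agree outside $I_h$, so $\overline{\Cal{H}}_{x_i,x_{i+1}}\subseteq I_h$ and $\Cal{H}_{x_i,x_{i+1}}\subseteq\Cal{A}_{I_h}$; the preferred neighbors through the smaller object form a subset of those through $\Cal{A}_{I_h}$, whence $x_{i+1}$ is still the best one and $b_{x_i,x_{i+1}}=x_{i+1}$. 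Moreover $\Phi(z,A_z)=\emptyset$ gives that no $w\succ z$ satisfies $\Cal{H}_{w,z}\subseteq\Cal{A}_{I_h}$, and a fortiori none satisfies $\Cal{H}_{w,z}\subseteq\Cal{H}_{x_i,x_{i+1}}$; this is precisely the condition $x_{i+1}\in G_{x_i}^z$, so $x_{i+1}\in BG_{x_i}^z$. A downward induction on $i$ then shows $\mathrm{deep}_z(x_i)\le s-i$: the base $x_s=z\in E_0^z$ is clear, and if $x_{i+1}\in E_{d'}^z$ with $d'=\mathrm{deep}_z(x_{i+1})\le s-i-1$, then $E_{d'}^z\cap BG_{x_i}^z\ne\emptyset$, so by the layer definition (and the fact that lying in an earlier layer can only lower the index) $\mathrm{deep}_z(x_i)\le d'+1\le s-i$. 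Taking $i=0$ and then the minimum over all paths yields $\mathrm{deep}_z(x)\le L$.

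The step I expect to require the most care is the transfer of the local-optimum property across the two directions: in the backward construction one must be sure that padding with singleton objects cannot accidentally create a preferred neighbor of $z$ (this is exactly where freeness of $z$ enters), while in the forward direction one must check that the $G$-condition for the \emph{smaller} subarrangement $\Cal{H}_{x_i,x_{i+1}}$ genuinely follows from $\Phi(z,A_z)=\emptyset$ stated for the \emph{larger} object $\Cal{A}_{I_h}$, combined with the minimality built into the layers $E_\bullet^z$. Everything else reduces to the identification $\Cal{H}_{x,y}=\Cal{A}_{\overline{\Cal{H}}_{x,y}}$ and the monotonicity of preferred-neighbor sets under shrinking the object.
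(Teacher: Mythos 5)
Your proof is correct and follows essentially the same route as the paper's: one inequality by unfolding the layers $E_\bullet^z$ into a maximal domination path whose length equals the deepness, the other by converting each step of a shortest maximal path into a $BG$-step (via $\Cal{H}_{x_i,x_{i+1}}\subseteq\Cal{A}_{I_h}$) and inducting down the path. In fact you make explicit two points the paper's terser proof leaves implicit --- the concrete objects scheme (path objects padded with singletons, where freeness of $z$ is used) and the observation that membership in an earlier layer can only lower the index --- so your write-up is, if anything, more complete.
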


\begin{proof}
Let $d$ be the deepness of the social outcome $x$ with respect to $z$ and let $h$ be the minimum of the lengths of all maximal domination paths $DP(x,z,A_z)$, among all objects schemes $A_z$ such that $\Phi(z,A_z)$ is empty.
If $d$ is $\infty$, by virtue of Theorem~\ref{teo:universal_basin}, there is no maximal domination path $DP(x,z,A_z)$, where $A_z$ is an objects scheme such that $\Phi(z,A_z)$ is empty, and hence $h$ is $\infty$.

If $d$ is not $\infty$, we can construct a maximal domination path
$$
x = x_d \prec x_{d-1} \prec \dotsb \prec x_1 \prec x_0 = z
$$
such that $x_j$ belongs to $E^z_j\cap BG^z_{x_{j+1}}$ for $j=0,\dotsc,d-1$ and hence we have $h\leq d$.
Let $DP(x,z,A_z)$ be a maximal domination path
$$
x = x_h \prec x_{h-1} \prec \dotsb \prec x_1 \prec x_0 = z
$$
of length $h$.
If $\module_{I_j}$ is the object of $A_z$ such that $x_{j-1}$ belongs to $B(x_j,\module_{I_j})$, we have $\hyperplanesep{x_j,x_{j-1}}\subseteq\module_{I_j}$ and $x_{j-1}\in E^z_{j-1} \cap BG^z_{x_j}$.
Thus, $x$ belongs to $E^z$ and hence to some $E^z_k$ with $k\leq h$.
Since the deepness of $x$ is $d$, we have $k=d$, and then
$$
d=k\leq h\leq d.
$$
The proof is complete.
\end{proof}

\begin{defi}
The {\em u-deepness} of a social outcome $z$ is
\begin{itemize}
\item
the maximum integer $h$ such that $E^z_h$ is not empty,
\item
$-\infty$ if all $E^z_h$'s are empty.
\end{itemize}
\end{defi}
Note that the u-deepness of a social outcome $z$ is $-\infty$ if and only if $z$ is not free.

\section{Theoretical results}
\label{sec:results}

In this section we will give results that ties the model described in~\cite{MarengoSette} with tournament theory.
From now on, we will denote by $\calT_i$'s the irreducible components of the graph $\Cal{Y}_{\succ}$, with $\maxcomp{\calT}$ being the maximal component.

\begin{prop}\label{prop:univ_basin_irred_comp}
If a social outcome $x \in \calT_i$ is in the universal basin of attraction $\Psi(z)$ of a social outcome $z \in \calT_j$, then $i \leqslant j$ holds.
\end{prop}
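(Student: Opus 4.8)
The plan is to reduce the statement to a purely tournament-theoretic fact about the condensation, namely that following dominating arcs can never take you into a lower irreducible component. First I would unfold the hypothesis: $x \in \Psi(z)$ means there is some objects scheme $A$ and a maximal domination path $DP(x,z,A)$ ending up in $z$, i.e. a sequence
$$
x = x_0 \prec x_1 \prec \dotsb \prec x_s = z
$$
with $x_k \in B(x_{k-1},\Cal{A}_{I_{h_k}})$ for each $k$. The crucial observation is that every step is, in particular, a preferred-neighbor relation, so $x_k \succ x_{k-1}$; hence $(x_k,x_{k-1}) \in \Cal{Y}_{1,\succ}$ and there is an arc oriented from $x_k$ to $x_{k-1}$ in $\Cal{Y}_{\succ}$. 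In tournament language, $x_k$ dominates $x_{k-1}$.

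Next I would feed this into the condensation structure recalled in Section~\ref{sec:graphs_tournaments}. Write $x_{k-1}\in\calT_a$ and $x_k\in\calT_b$. If $a=b$ there is nothing to check for this step. If $a\neq b$, then since distinct irreducible components are totally comparable (all nodes of one either dominate or are dominated by all nodes of the other), the fact that $x_k$ dominates $x_{k-1}$ forces $\calT_b$ to dominate $\calT_a$; by the labeling convention (whereby $i>j$ exactly when $\calT_i$ dominates $\calT_j$) this gives $b>a$. In either case $b\geq a$, so the component index is non-decreasing along every single step of the path.

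Finally, chaining these inequalities from $x_0=x\in\calT_i$ to $x_s=z\in\calT_j$ yields $i\leqslant j$, which is the claim. Note that the conclusion is independent of the particular objects scheme $A$ realizing the path, so it holds for the whole universal basin $\Psi(z)=\bigcup_A\Psi(z,A)$ at once.

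The argument is genuinely short, and the only point requiring care is the opening reduction: one must notice that a domination step, although defined through best neighbors and objects, carries with it an honest dominating arc of $\Cal{Y}_{\succ}$, so that the combinatorial condensation property applies verbatim. Once that bridge is in place the monotonicity of the component index and the final inequality are immediate, and no properties of the hyperplane arrangement beyond the definition of $\prec$ are needed.
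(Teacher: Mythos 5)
Your proof is correct, and it takes a genuinely (if mildly) different route from the paper's. The paper also starts from a maximal domination path $x = x_0 \prec x_1 \prec \dotsb \prec x_s = z$, but instead of tracking components along the path it argues by cases on the two endpoints: since the rule is strict, either $x \succ z$ or $x \prec z$; in the first case the arc from $x$ to $z$ closes the path into a cycle through both outcomes, and since nodes lying on a common cycle belong to the same irreducible component this forces $i=j$; in the second case $z$ dominates $x$ directly, and the comparability and labeling of components give $i \leqslant j$. Your argument replaces this case analysis, and in particular the cycle--irreducibility fact, with a purely local observation: each arc of the path is an honest dominating arc of $\Cal{Y}_{\succ}$, so the component index is non-decreasing step by step, and the inequality follows by chaining. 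This is slightly more elementary (it never needs the fact that a common cycle forces a common component), and it actually establishes a marginally stronger statement --- monotonicity of the component index along \emph{any} domination path --- which is precisely what the paper later uses without separate justification in the proof of Proposition~\ref{prop1}, where the integers $i_j$ with $x_j\in\calT_{i_j}$ are asserted to be ``ordered non decreasingly.'' What the paper's version buys in exchange is a finer conclusion in its first case: if $x\in\Psi(z)$ and moreover $x \succ z$, then $x$ and $z$ lie in the same irreducible component, not merely $i \leqslant j$.
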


\begin{proof}
Since $x$ belongs to $\Psi(z)$, there exists an objects scheme $A$, an agenda $\alpha$ and a maximal domination path $DP(x,z,A_{\alpha})$,
$$
x = x_0 \prec x_1 \prec \dotsb \prec x_s = z,
$$
ending up in $z$.
Two cases may occur: either $x \succ z$ or $x \prec z$.
In the former case, there exists a domination cycle $\gamma$ that contains $x$ and $z$, i.e.~we have $i=j$.
In the latter case, we have $i \leqslant  j$.
This concludes the proof.
\end{proof}

\begin{cor}\label{cor2}
Each u-local optimum belongs to $\maxcomp{\calT}$.
\end{cor}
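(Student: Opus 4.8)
The plan is to obtain this as an immediate consequence of Proposition~\ref{prop:univ_basin_irred_comp} together with the defining property of a u-local optimum. First I would let $z$ be a u-local optimum and write $z \in \calT_j$ for the irreducible component of $\Cal{Y}_{\succ}$ containing it. By definition of u-local optimum, $\Psi(z) = \confspace$, so \emph{every} social outcome lies in the universal basin of attraction of $z$.

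Next I would feed each component into Proposition~\ref{prop:univ_basin_irred_comp}. For an arbitrary component index $i$, choose any $x \in \calT_i$; since $x \in \confspace = \Psi(z)$, the proposition yields $i \leqslant j$. Letting $i$ range over all component indices, this forces $j$ to be the maximal index among the irreducible components.

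Finally, recalling the convention that the subscripts are chosen so that $i > j$ implies $\calT_i$ dominates $\calT_j$, the component of largest index is precisely $\maxcomp{\calT}$. Hence $\calT_j = \maxcomp{\calT}$, that is, $z \in \maxcomp{\calT}$, as claimed. The substantive work has already been done in Proposition~\ref{prop:univ_basin_irred_comp}; there is no genuine obstacle here, only the observation that the universal condition $\Psi(z) = \confspace$ upgrades the per-component inequality $i \leqslant j$ into the statement that the component of $z$ dominates all the others.
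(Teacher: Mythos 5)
Your proposal is correct and is precisely the argument the paper intends: the corollary is stated as an immediate consequence of Proposition~\ref{prop:univ_basin_irred_comp}, and your reasoning (every $x\in\confspace=\Psi(z)$ forces $i\leqslant j$ for each component index $i$, so $z$'s component has the maximal index and is therefore $\maxcomp{\calT}$ under the paper's indexing convention) is exactly the omitted verification. Nothing is missing.
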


\begin{rem}
The converse of the above corollary is not true.
For instance, the social rule whose graph is shown in Figure~\ref{fig:maxcomp_no_globalopt} has only one irreducible component and no u-local optimum.
\begin{figure}[t]
  \begin{center}
        $\xymatrix{
        \nodeone{0} \arc{[r]} & \nodeone{1} \arc{[r]} & \nodeone{2}
          \arccurv{[ll]}{@/_13pt/}
        }$
  \end{center}
  \caption{A social rule with no (u-)local optimum.}
  \label{fig:maxcomp_no_globalopt}
\end{figure}
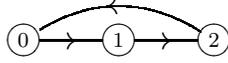

Moreover, for a social outcome $z$ the property of being a local optimum for 
an objects scheme (and agenda) and the property of belonging to $\maxcomp{\calT}$ are not 
related to each other.
The social rule whose graph is shown in Figure~\ref{fig:maxcomp_no_globalopt} has no local optimum, while that shown in Figure~\ref{fig:localopt_no_maxcomp}, for the objects scheme $\{\{H_1\},\{H_2\}\}$, has a local optimum, $00$, which is not in $\maxcomp{\calT}=\{11\}$.
\begin{figure}[t]
  \begin{center}
        $\xymatrix@R=35pt@C=35pt{
        \nodetwo{10} \arccurvshift{[rd]}{}{0.4} & \nodetwo{11} \arc{[d]} \arc{[l]}
          \arccurvshift{[ld]}{}{0.7} \\
        \nodetwo{00} \arc{[r]} \arc{[u]}  & \nodetwo{01}
        }$
  \end{center}
  \caption{A social rule with a local optimum, $00$, which is not in $\maxcomp{\calT}=\{11\}$.}
  \label{fig:localopt_no_maxcomp}
\end{figure}
\end{rem}

\begin{prop}
A social outcome $z \in X$ is a local optimum for all objects schemes if and 
only if $z$ is the only element in $\maxcomp{\calT}$.
\end{prop}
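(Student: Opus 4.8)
The plan is to reformulate the statement purely in terms of the domination structure of the tournament $\calT=\Cal{Y}_{\succ}$ and then to test it against a single, maximally coarse objects scheme. First I would recall that, by definition, $z$ is a local optimum for an objects scheme $A$ precisely when $\Phi(z,A)=\emptyset$. Hence $z$ is a local optimum for \emph{all} objects schemes if and only if $\Phi(z,A)=\emptyset$ for every $A\in\Pi(\Cal{A}_{n,m})$. My claim is that this last condition is equivalent to the requirement that no social outcome $y$ satisfies $y\succ z$; that is, that $z$ dominates every other node of $\Cal{Y}_{\succ}$.

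To prove this equivalence, the easy direction is immediate: if there is no $y\succ z$, then Condition~1) in the definition of preferred neighbor can never be met, so $\Phi(z,\Cal{A}_{I_h})=\emptyset$ for every object $\Cal{A}_{I_h}$, and therefore $\Phi(z,A)=\emptyset$ for every scheme $A$. For the converse I would exhibit one scheme that detects any dominating outcome. Take the scheme $\{\Cal{A}_{n,m}\}$ consisting of the single object $\Cal{A}_{\{1,\dotsc,n\}}=\Cal{A}_{n,m}$, which is an admissible objects scheme because $\{1,\dotsc,n\}$ covers all the features. For this object the complement index set is empty, so $\Cal{A}_{\{1,\dotsc,n\}^c}$ is the empty arrangement with a single chamber; thus Condition~2) holds automatically, and Condition~3) reduces to $y\neq z$. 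Consequently $\Phi(z,\{\Cal{A}_{n,m}\})=\{y\in\confspace\mid y\succ z\}$, which is empty exactly when no outcome dominates $z$. Since $z$ being a local optimum for all schemes forces in particular $\Phi(z,\{\Cal{A}_{n,m}\})=\emptyset$, this closes the equivalence.

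It then remains to identify \emph{``$z$ dominates every other node''} with \emph{``$z$ is the only element of $\maxcomp{\calT}$''}. If $z$ beats every other outcome, then $z$ is dominated by no one; writing $\calT_k$ for the irreducible component containing $z$, any second node $w\in\calT_k$ would make $\calT_k$ an irreducible tournament on at least two (hence at least three) nodes, so by the properties recalled in Section~\ref{sec:graphs_tournaments} the node $z$ would lie on a cycle and would be dominated by its predecessor on that cycle --- a contradiction. Hence $\calT_k=\{z\}$, and since $z$ dominates all nodes of the other components, $\calT_k$ dominates them too, so $\calT_k=\maxcomp{\calT}$ and therefore $\maxcomp{\calT}=\{z\}$. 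Conversely, if $\maxcomp{\calT}=\{z\}$, then by the total ordering of the condensation the maximal component dominates all other components, so $z$ dominates every node outside $\maxcomp{\calT}$; and there is no node inside $\maxcomp{\calT}$ other than $z$. Thus $z$ dominates everyone and no $y\succ z$ exists.

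The only delicate point, and the step I would be most careful about, is the computation of $\Phi(z,\{\Cal{A}_{n,m}\})$: one must verify that the degenerate one-object scheme $\{\Cal{A}_{n,m}\}$ is legitimate and that the empty complement arrangement contributes a single chamber, so that Condition~2) is vacuous and only $y\succ z$, $y\neq z$ survive. Everything else is a direct translation between the ``no dominating outcome'' condition and the elementary structure of irreducible components and the condensation, so I expect no further obstruction.
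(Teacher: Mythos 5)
Your proof is correct and takes essentially the same approach as the paper's: both test $z$ against the single-object scheme $\{\Cal{A}_{n,m}\}$, for which $\Phi(z,\{\Cal{A}_{n,m}\})=\{y\in\confspace\mid y\succ z\}$, and then identify ``$z$ dominates every other outcome'' with ``$\maxcomp{\calT}=\{z\}$''. You simply spell out the tournament-theoretic step (an irreducible component with more than one node puts $z$ on a cycle) that the paper treats as obvious.
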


\begin{proof}
If $z \in X$ is a local optimum for all objects schemes, in particular it is a local optimum for $A=\{\Cal A_{n,m}\}$.
Then, we have $B(z,\Cal A_{n,m})=\emptyset$, i.e.~we have $z \succ x$ for all $x \in X\setminus\{z\}$ and hence $z$ is the only social outcome in $\maxcomp{\calT}$.
The converse is obvious.
\end{proof}

\begin{defi}
Let $x \in \calT_i$ be a social outcome.
We say that $x$ is {\em lifting with respect to an objects scheme $A$} if there is an object $\Cal A \in A$ such that the best neighbor $y \in B(x,\Cal A)$ belongs to a component $\calT_j$ such that $j >i$.
\end{defi}

By definition, in $\maxcomp{\calT}$ there are no lifting social outcomes.
Indeed, social outcomes that are lifting with respect the objects scheme $A$ arise when an arc in a domination path through $A$ has the endpoints in two different irreducible components.
In the following theorem we will give an equivalent condition for a
social outcome $x \in X$ to be lifting.

\begin{teo}
A social outcome $x$ in an irreducible component $\calT_i$ is lifting for at least an objects scheme $A$ if and only if there exists a social outcome $y \in \calT_j$ with $j>i$ such that the following condition holds:
$$
\Cal{H}_{w,y} \nsubseteq \Cal{H}_{x,y} \quad
\forall w \in X\ \text{such that}\ w \succ y.
$$
\end{teo}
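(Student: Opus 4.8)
The plan is to reduce both implications to a single statement about the object $\Cal{H}_{x,y}$ itself, namely to the question of when $y$ is the best neighbor $B(x,\Cal{H}_{x,y})$, and then to upgrade this into a genuine objects scheme. The computational backbone will be the elementary ``triangle inequality'' for feature sets, $\overline{\Cal{H}}_{a,c}\subseteq\overline{\Cal{H}}_{a,b}\cup\overline{\Cal{H}}_{b,c}$ for any three social outcomes $a,b,c$ (a feature on which $a$ and $c$ disagree is one on which $b$ disagrees with at least one of them), together with the equivalence $\Cal{H}_{a,b}\subseteq\Cal{H}_{c,d}\iff\overline{\Cal{H}}_{a,b}\subseteq\overline{\Cal{H}}_{c,d}$ recalled in the excerpt. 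I will also use freely that $x\in\calT_i$ and $y\in\calT_j$ with $j>i$ forces $\calT_j$ to dominate $\calT_i$, so that $y\succ x$.

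For the implication ``condition $\Rightarrow$ lifting'', suppose $y\in\calT_j$, $j>i$, satisfies $\Cal{H}_{w,y}\nsubseteq\Cal{H}_{x,y}$ for all $w\succ y$. First I would verify $y\in B(x,\Cal{H}_{x,y})$. Indeed $y\in\Phi(x,\Cal{H}_{x,y})$ since $y\succ x$ and $\Cal{H}_{x,y}\subseteq\Cal{H}_{x,y}$; and if $y$ were not best, there would be $u\in\Phi(x,\Cal{H}_{x,y})$ with $u\succ y$, so $u\succ x$ and $\overline{\Cal{H}}_{x,u}\subseteq\overline{\Cal{H}}_{x,y}$, whence by the triangle inequality $\overline{\Cal{H}}_{u,y}\subseteq\overline{\Cal{H}}_{x,u}\cup\overline{\Cal{H}}_{x,y}=\overline{\Cal{H}}_{x,y}$, i.e.\ $\Cal{H}_{u,y}\subseteq\Cal{H}_{x,y}$ with $u\succ y$, contradicting the hypothesis. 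Since $B(x,\cdot)$ is empty or a singleton, this gives $B(x,\Cal{H}_{x,y})=\{y\}$. Now $\Cal{H}_{x,y}$ is exactly the object $\Cal{A}_{\overline{\Cal{H}}_{x,y}}$, so I complete it to an objects scheme, e.g.\ $A=\{\Cal{A}_{\overline{\Cal{H}}_{x,y}},\Cal{A}_{n,m}\}$, whose objects cover all features. As $B(x,\Cal{H}_{x,y})=\{y\}$ and $y\in\calT_j$ with $j>i$, the outcome $x$ is lifting for $A$.

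For the converse, suppose $x$ is lifting for some objects scheme $A$, witnessed by an object $\Cal{A}_{I_h}\in A$ with $y:=B(x,\Cal{A}_{I_h})\in\calT_j$, $j>i$; in particular $y\succ x$ and $\Cal{H}_{x,y}\subseteq\Cal{A}_{I_h}$. I claim this same $y$ satisfies the required condition. Assume instead that some $w\succ y$ has $\Cal{H}_{w,y}\subseteq\Cal{H}_{x,y}$. The triangle inequality then gives $\overline{\Cal{H}}_{x,w}\subseteq\overline{\Cal{H}}_{x,y}\cup\overline{\Cal{H}}_{y,w}=\overline{\Cal{H}}_{x,y}$, so $\Cal{H}_{x,w}\subseteq\Cal{H}_{x,y}\subseteq\Cal{A}_{I_h}$. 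Now examine the arc between $x$ and $w$: if $w\succ x$, then $w\in\Phi(x,\Cal{A}_{I_h})$ and $w\succ y$, contradicting that $y$ is the best neighbor; if $x\succ w$, then $x\succ w\succ y\succ x$ is a $3$-cycle, placing $x$ and $y$ in the same irreducible component and contradicting $i\neq j$. Both cases being impossible, the condition holds.

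The main obstacle is the lack of transitivity of $\succ$. In the converse the naive argument only disposes of the bad candidate $w$ when $w\succ x$, and one must rule out the possibility $x\succ w$. The crucial observation is that this possibility would produce a $3$-cycle through $x$ and $y$, which the irreducible-component structure forbids because $x$ and $y$ were placed in distinct components $\calT_i$ and $\calT_j$. This is the one point where the hypothesis $j>i$ (rather than merely $y\succ x$) is genuinely used, and it is exactly what ties the combinatorics of best neighbors to the condensation of the tournament.
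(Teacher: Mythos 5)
Your proof is correct and takes essentially the same route as the paper's: both directions pivot on the single object $\Cal{H}_{x,y}$ and the identity $B(x,\Cal{H}_{x,y})=\{y\}$, with the component ordering supplying $y\succ x$. In fact you are more careful than the paper at the two points it leaves implicit: you explicitly complete $\Cal{H}_{x,y}$ to an objects scheme (your $A=\{\Cal{A}_{\overline{\Cal{H}}_{x,y}},\Cal{A}_{n,m}\}$), and in the lifting-implies-condition direction the paper's step ``$\Cal{H}_{w,y}\subseteq\Cal{A}$, so $y$ cannot belong to $B(x,\Cal{A})$'' silently assumes $w\succ x$, a gap created by the non-transitivity of $\succ$ that your $3$-cycle argument (equivalently: $w\succ y$ forces $w$ into a component $\calT_k$ with $k\geqslant j>i$, whence $w\succ x$) closes correctly.
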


\begin{proof}
Let $x \in \calT_i$ be a lifting social outcome for an objects scheme $A$.
Then, there exists $y \in \calT_j$ with $j>i$ such that
$B(x,\Cal A)=\{y\}$ for an object $\Cal A \in A$.
By construction, we have $\Cal{H}_{x,y} \subseteq \Cal A$.
Suppose by contradiction that there exists a social outcome $w \succ y$ such that $\Cal{H}_{w,y} \subseteq \Cal{H}_{x,y}$.
We have $\Cal{H}_{w,y} \subseteq \Cal A$, so $y$ cannot belong to $B(x, \Cal A)$, a contradiction.

Conversely, let $y \in \calT_j$, with $j>i$, be a social outcome such that $\Cal{H}_{w,y} \nsubseteq \Cal{H}_{x,y}$ for all $w \succ y$.
Then, we have $y \succ x$.
Moreover, for each social outcome $w \succ y$ we have
$w(\Cal{H}_{x,y}^c) \neq y(\Cal{H}_{x,y}^c)$, i.e.~$w$ is a neighbor neither of $x$ nor of $y$ with respect to $\Cal{H}_{x,y}$.
Therefore, we obtain $B(x, \Cal{H}_{x,y})=\{y\}$ and hence the thesis.
\end{proof}

Until the end of this section, we fix a social outcome $z$, which will be a candidate for being an u-local optimum.
We give the following necessary conditions on the irreducible 
components $\calT_i$ of the graph $\Cal{Y}_{\prec}$ in order for $z$ to be an u-local optimum.
\begin{prop}\label{prop1}
If $z$ is an u-local optimum, the following statements hold.
\begin{enumerate}[(i)]
\item\label{item:prop1_exist}
For each social outcome $x\in\calT_i$, with $\calT_i\neq\maxcomp{\calT}$, there exist an objects scheme $\scheme_x$ such that $\Phi(z,A_x)$ is empty and a domination path $DP(x,y,A_x)$ to a social outcome $y\in\calT_i$ lifting with respect to $A_x$.
\item\label{item:prop1_lifting}
For each social outcome $x\in\calT_i$, with $\calT_i\neq\maxcomp{\calT}$, every domination path $DP(x,z,A)$ through an objects scheme $A$ such that $\Phi(z,A)$ is empty contains a social outcome $y\in\calT_i$ lifting with respect to $A$.
\item\label{item:prop1_contain}
Each $\calT_i$ different from $\maxcomp{\calT}$ contains a lifting social outcome with respect to an objects scheme $A$ such that $\Phi(z,A)$ is empty.
\end{enumerate}
\end{prop}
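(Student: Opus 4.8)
The plan is to derive all three statements from a single structural fact about domination paths: along any domination path the index of the irreducible component containing the current outcome is non-decreasing, and it increases strictly at every change of component. I would record this monotonicity first. A step $x_k \prec x_{k+1}$ of a domination path means $x_{k+1} \succ x_k$, since a best neighbor is in particular a preferred neighbor. If $x_k \in \calT_a$ and $x_{k+1} \in \calT_b$ with $a \neq b$, the condensation being transitive forces the two components to be comparable; were $\calT_a$ to dominate $\calT_b$ (i.e.\ $a > b$) we would get $x_k \succ x_{k+1}$, a contradiction, so necessarily $b > a$. Together with Corollary~\ref{cor2}, which places the u-local optimum $z$ in $\maxcomp{\calT}$, this is the engine for everything.

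I would prove statement~\ref{item:prop1_lifting} first, as it is the core. Fix $x \in \calT_i$ with $\calT_i \neq \maxcomp{\calT}$ and any domination path $DP(x,z,A)$, say $x = x_0 \prec \dotsb \prec x_s = z$, through an objects scheme $A$ with $\Phi(z,A) = \emptyset$. Since $z \in \maxcomp{\calT}$ while $x_0 = x \in \calT_i \neq \maxcomp{\calT}$, the path must leave $\calT_i$; let $k$ be the largest index with $x_k \in \calT_i$. Then $x_{k+1} \notin \calT_i$, and by the monotonicity above $x_{k+1}$ lies in some component $\calT_b$ with $b > i$. As $x_{k+1} \in B(x_k,\Cal A)$ for an object $\Cal A \in A$, the outcome $y = x_k \in \calT_i$ is lifting with respect to $A$, which is precisely the assertion.

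Statement~\ref{item:prop1_exist} then follows by feeding the hypothesis into~\ref{item:prop1_lifting}. Because $z$ is a u-local optimum, $\Psi(z) = \confspace$, so each $x$ lies in $\Psi(z,A_x)$ for some objects scheme $A_x$; by Remark~\ref{rem:basin_local} this forces $\Phi(z,A_x) = \emptyset$, and by definition of the basin there is a maximal domination path $DP(x,z,A_x)$ ending up in $z$. Applying the argument of~\ref{item:prop1_lifting} to this path produces a lifting $y \in \calT_i$ on it, and the initial segment $x = x_0 \prec \dotsb \prec x_k = y$ is the required $DP(x,y,A_x)$. Statement~\ref{item:prop1_contain} is then immediate: since $\calT_i$ is non-empty, any choice of $x \in \calT_i$ yields via~\ref{item:prop1_exist} an objects scheme $A_x$ with $\Phi(z,A_x)=\emptyset$ and a lifting outcome in $\calT_i$ with respect to it.

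The only delicate point is the monotonicity lemma, and within it the verification that a change of component can go only ``upward''; this rests on the total order of the irreducible components in the condensation together with the fact that each step of a domination path strictly increases preference. Everything else is bookkeeping around the choice of the index $k$ and the passage from an existence-of-path statement to a lifting outcome, so I expect no serious obstacle once that lemma is stated carefully.
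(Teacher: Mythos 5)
Your proposal is correct and takes essentially the same approach as the paper: both arguments rest on the non-decreasing behavior of the irreducible-component indices along a domination path (which the paper asserts and you prove explicitly via the condensation) together with Corollary~\ref{cor2}, and both extract the lifting outcome as the last element of the path lying in $\calT_i$. The only difference is organizational---you prove point~(\ref{item:prop1_lifting}) first and derive point~(\ref{item:prop1_exist}) from it, while the paper proves point~(\ref{item:prop1_exist}) directly and notes that point~(\ref{item:prop1_lifting}) is similar---which is immaterial.
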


\begin{proof}
Let us prove Point~(\ref{item:prop1_exist}).
Let $x$ be a social outcome belonging to $\calT_i$, with $\calT_i\neq\maxcomp{\calT}$.
Since $x$ belongs to $\Psi(z)$, there exist an objects scheme $\scheme$ such that $\Phi(z,\scheme)$ is empty and a maximal domination path $DP(x,z,A)$,
$$
x = x_0 \prec x_1 \prec \dotsb \prec x_s = z,
$$
ending up in $z$.
For $j=0,\dotsc,s$, let us define the integer $i_j$ such that $x_j$ belongs to $\calT_{i_j}$.
These integers are ordered non decreasingly and $i_0$ differs from $i_s$.
Therefore, there exists a maximal one (say $\bar{\jmath}$) different from $s$ and such that $i_{\bar{\jmath}}=i_0$ and $i_{\bar{\jmath}+1}\neq i_0$ hold.
The social outcome $y=x_{\bar{\jmath}}$ belongs to $\calT_i$ and is lifting with respect to $\scheme$, so the domination path
$$
x = x_0 \prec x_1 \prec \dotsb \prec x_{\bar{\jmath}} = y,
$$
is the path we are looking for.

A proof of Point~(\ref{item:prop1_lifting}) is very similar to that of Point~(\ref{item:prop1_exist}), so we leave it to the reader.
Point~(\ref{item:prop1_contain}) is a direct consequence of Point~(\ref{item:prop1_exist}).
\end{proof}

\begin{prop}\label{prop:comp_lifting}
Suppose that there is an irreducible component $\calT_i\neq\maxcomp{\calT}$ such that for each 
$x\in\calT_i$ we have $G^z_x\subseteq\calT_i$ (or equivalently $BG_x^z\subseteq\calT_i$).
Then z is not an u-local optimum.
\end{prop}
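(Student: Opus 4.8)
The plan is to argue by contradiction, exploiting the recursive construction of the sets $E^z_h$ together with the two structural facts already available: Theorem~\ref{teo:universal_basin}, which identifies $\Psi(z)$ with $E^z$, and Corollary~\ref{cor2}, which forces every u-local optimum to lie in $\maxcomp{\calT}$.

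Suppose, towards a contradiction, that $z$ is an u-local optimum. Then $\Psi(z)=\confspace$, so by Theorem~\ref{teo:universal_basin} we have $E^z=\confspace$; moreover Corollary~\ref{cor2} gives $z\in\maxcomp{\calT}$. Since the irreducible components partition the social outcomes and $\calT_i\neq\maxcomp{\calT}$, this already yields $z\notin\calT_i$, i.e.~$\calT_i\cap E^z_0=\emptyset$.

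The heart of the argument is a minimal-deepness step inside $\calT_i$. Because $\calT_i$ is non-empty and contained in $E^z=\confspace$, each of its elements has finite deepness, so I can choose $x\in\calT_i$ whose deepness $d$ is minimal among the elements of $\calT_i$; by the previous paragraph $d\geqslant1$. Unwinding the definition of $E^z_d$, there is a social outcome $y\in E^z_{d-1}\cap BG^z_x$. Here the hypothesis enters decisively: since $BG^z_x\subseteq\calT_i$, we get $y\in\calT_i$. If $d=1$ then $y\in E^z_0=\{z\}$, contradicting $z\notin\calT_i$; if $d\geqslant2$ then $y$ is an element of $\calT_i$ of deepness $d-1<d$, contradicting the minimality of $d$. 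In either case we reach a contradiction, so $z$ cannot be an u-local optimum.

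The only point requiring care is that this minimality argument be legitimate, i.e.~that deepness is well-defined on $E^z$; this is precisely the partition property of the $E^z_*$ recorded just after the definition of deepness, so I do not expect a genuine obstacle. For completeness one may also justify the equivalence of the two forms of the hypothesis: the inclusion $G^z_x\subseteq\calT_i$ trivially implies $BG^z_x\subseteq\calT_i$ since $BG^z_x\subseteq G^z_x$, while the converse follows by noting that for $y\in G^z_x$ the best neighbor $b_{x,y}$ satisfies $b_{x,y}\succeq y\succ x$, which forces $x$, $y$ and $b_{x,y}$ into the same irreducible component via the total order on the components; in any case only the weaker form $BG^z_x\subseteq\calT_i$ is actually used above.
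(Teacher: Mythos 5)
Your proof is correct, and it takes a genuinely different (and more self-contained) route than the paper's. The paper also argues by contradiction, but its key tool is the lifting machinery: given $x\in\calT_i$, Theorem~\ref{teo:universal_basin} yields an objects scheme $A$ with $\Phi(z,A)=\emptyset$ and a domination path $x=x_0\prec x_1\prec\dotsb\prec x_{s+1}=z$ in which $x_{j+1}\in BG^z_{x_j}$ for every $j$; Proposition~\ref{prop1}-(\ref{item:prop1_lifting}) then guarantees that this path contains a social outcome $x_{\bar{\jmath}}\in\calT_i$ which is lifting with respect to $A$, i.e.\ whose successor $x_{\bar{\jmath}+1}\in BG^z_{x_{\bar{\jmath}}}$ lies in a strictly higher component, contradicting $BG^z_{x_{\bar{\jmath}}}\subseteq\calT_i$. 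You reach the same kind of contradiction --- some $BG$-step must exit $\calT_i$, while the hypothesis forbids any --- but you localize that step by a minimal-deepness (well-ordering) argument on the partition $\{E^z_h\}$ instead of invoking Proposition~\ref{prop1}: the least-deepness element $x\in\calT_i$ has deepness $d\geqslant1$ (since Corollary~\ref{cor2} puts $z$ in $\maxcomp{\calT}$, so $z\notin\calT_i$), and any $y\in E^z_{d-1}\cap BG^z_x$ is trapped in $\calT_i$ by hypothesis, contradicting $E^z_0=\{z\}$ if $d=1$ and minimality of $d$ if $d\geqslant2$. What your route buys is economy of means: only Theorem~\ref{teo:universal_basin}, Corollary~\ref{cor2}, the recursive definition of the $E^z_h$'s and their partition property are used, and a single step of the chain suffices; in particular the notion of lifting never appears. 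What the paper's route buys is that the burden is carried by Proposition~\ref{prop1}-(\ref{item:prop1_lifting}), a structural fact about all domination paths ending in $z$ which the authors prove separately and which is of independent interest. A further point in your favor: you actually justify the parenthetical equivalence of $G^z_x\subseteq\calT_i$ and $BG^z_x\subseteq\calT_i$ (via $b_{x,y}\succeq y\succ x$ and the total ordering of the irreducible components under domination), which the paper asserts without proof; and, as you observe, only the weaker inclusion $BG^z_x\subseteq\calT_i$ is needed for the argument.
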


\begin{proof}
Suppose by contradiction that $z\in\confspace$ is an u-local optimum.
Let $x$ be a social outcome that belongs to $\calT_i$.
By means of Theorem~\ref{teo:universal_basin}, we obtain an objects scheme $A$ such that $\Phi(z,A)$ is empty and a domination path $DP(x,z,A)$,
$$
x = x_0 \prec x_1 \prec \dotsb \prec x_s \prec x_{s+1} = z,
$$
where $x_j$ belongs to $E^z$ for each $j=0,\dotsc,s+1$, i.e.~$x_{j+1}$ belongs to $BG_{x_j}^z$ for each $j=0,\dotsc,s$.
By virtue of Proposition~\ref{prop1}-(\ref{item:prop1_lifting}), there exists $\bar{\jmath}\in\{0,\dotsc,s\}$ such that $x_{\bar{\jmath}}$ belongs to $\calT_i$ and is lifting with respect to $A$, a contradiction to the hypothesis.
\end{proof}

We will denote by
$$
{\cal S}^z_i=\{x\in\calT_i\mid G^z_x\subseteq\calT_i\}
$$
the set of the social outcomes of the irreducible component $\calT_i$ that are not lifting with respect to any objects scheme $\scheme$ such that $\Phi(z,A)$ is empty.
Therefore, the proposition above can be restated as follows.
\addtocounter{teo}{-1}
\begin{prop}
If $z$ is an u-local optimum, then ${\cal S}^z_i\neq\calT_i$ holds for all $\calT_i\neq\maxcomp{\calT}$.
\end{prop}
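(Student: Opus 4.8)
The plan is to recognize this proposition as nothing more than the contrapositive of Proposition~\ref{prop:comp_lifting}, reformulated through the set ${\cal S}^z_i$; accordingly I would derive it directly from that proposition after unwinding the definition of ${\cal S}^z_i$. No new geometric or combinatorial input is needed, since all the substantive work has already been done in establishing Proposition~\ref{prop:comp_lifting}.

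First I would record the trivial inclusion ${\cal S}^z_i\subseteq\calT_i$, which holds by definition, as every element of ${\cal S}^z_i$ is required to lie in $\calT_i$. Consequently the equality ${\cal S}^z_i=\calT_i$ is equivalent to the reverse inclusion $\calT_i\subseteq{\cal S}^z_i$, that is, to the assertion that $G^z_x\subseteq\calT_i$ for every $x\in\calT_i$. This is precisely the hypothesis appearing in Proposition~\ref{prop:comp_lifting}. Next I would argue by contraposition: assuming $z$ is an u-local optimum and, for contradiction, that ${\cal S}^z_i=\calT_i$ for some irreducible component $\calT_i\neq\maxcomp{\calT}$, the reformulation above shows that $G^z_x\subseteq\calT_i$ holds for all $x\in\calT_i$, so the hypothesis of Proposition~\ref{prop:comp_lifting} is met. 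That proposition then yields that $z$ is not an u-local optimum, contradicting the assumption. Hence no such component exists, i.e.\ ${\cal S}^z_i\neq\calT_i$ for every $\calT_i\neq\maxcomp{\calT}$.

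There is essentially no obstacle here: the content of the statement is entirely carried by Proposition~\ref{prop:comp_lifting}, and the present proposition merely repackages it in terms of the sets ${\cal S}^z_i$. The only point deserving a moment of care is the parenthetical equivalence ``$G^z_x\subseteq\calT_i$ (or equivalently $BG^z_x\subseteq\calT_i$)'' used in Proposition~\ref{prop:comp_lifting}; by phrasing everything through $G^z_x$, which matches the definition of ${\cal S}^z_i$ verbatim, I would sidestep this subtlety altogether and avoid having to reprove the inclusion $BG^z_x\subseteq G^z_x$ recorded earlier.
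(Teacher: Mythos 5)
Your proposal is correct and is exactly the paper's approach: the paper offers no separate proof, introducing ${\cal S}^z_i$ and stating that ``the proposition above can be restated as follows,'' i.e.\ the statement is just the contrapositive of Proposition~\ref{prop:comp_lifting} once one notes that ${\cal S}^z_i\subseteq\calT_i$ by definition, so ${\cal S}^z_i=\calT_i$ means $G^z_x\subseteq\calT_i$ for every $x\in\calT_i$. Your unwinding of the definition and the contraposition argument supply precisely the (routine) details the paper leaves implicit.
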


For each irreducible component $\calT_i$, we will now construct a particular sub-graph of $\calT_i$.
It will give information on the possible domination paths through an objects scheme, starting from a social outcome of $\calT_i$ and ending up in $z$.
The nodes of this graph are the social outcomes in $\calT_i$; if $x$ and $y$ are social outcomes, there is an arc from $x$ to $y$ if $y\in BG_x^z$.
Note that lifting social outcomes are maximal elements of this graph.
\begin{prop}
Suppose $z$ is an u-local optimum.
Then the following two conditions are satisfied:
\begin{itemize}
\item
for each irreducible component $\calT_i\neq\maxcomp{\calT}$, each maximal element of the graph constructed above (considered as a social outcome) is lifting with respect to an objects scheme $\scheme$ such that $\Phi(z,A)$ is empty;
\item
for $\maxcomp{\calT}$, the u-local optimum $z$ is the only maximal element of the graph constructed above.
\end{itemize}
\end{prop}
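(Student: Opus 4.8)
The plan is to rephrase maximality in the auxiliary graph purely in terms of $BG^z_x$ and then run everything through the identity $\Psi(z)=E^z$ of Theorem~\ref{teo:universal_basin}. Since $z$ is a u-local optimum it is free (Remark~\ref{rem:basin_local} and Theorem~\ref{principio}), so the sets $G^z_\bullet$, $BG^z_\bullet$, $E^z_\bullet$ are the genuinely defined ones (the non-free convention does not apply) and $\Psi(z)=\confspace$. The structural fact I would isolate at the start is that $BG^z_x\subseteq G^z_x\subseteq\{y\mid y\succ x\}$, so every element of $BG^z_x$ dominates $x$ and therefore lies either in the same component as $x$ or in a strictly higher one. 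Consequently, for $x\in\calT_i$ the arcs of the graph on $\calT_i$ issuing from $x$ reach exactly the elements of $BG^z_x\cap\calT_i$, and $x$ is maximal if and only if $BG^z_x\cap\calT_i=\emptyset$.

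For the first bullet I would fix $\calT_i\neq\maxcomp{\calT}$ and a maximal $x\in\calT_i$ (note $x\neq z$, since $z\in\maxcomp{\calT}$ by Corollary~\ref{cor2}). From $x\in\confspace=E^z$ and $x\neq z$ one gets $x\in E^z_d$ for some $d\geq 1$, whence $E^z_{d-1}\cap BG^z_x\neq\emptyset$ and in particular $BG^z_x\neq\emptyset$. Picking $x'\in BG^z_x$, maximality forces $x'\notin\calT_i$, and since $x'\succ x$ this places $x'$ in some $\calT_j$ with $j>i$. Writing $x'=b_{x,y}$ with $y\in G^z_x$, I would then take the objects scheme $A_z=\{\Cal{H}_{x,y}\}\cup\{\Cal{A}_{\{1\}},\dotsc,\Cal{A}_{\{n\}}\}$: the singletons cover all features so $A_z$ is legitimate, each singleton is $z$-safe because freeness forbids a dominating neighbor at prominent distance $1$, and $\Cal{H}_{x,y}$ is $z$-safe precisely because $y\in G^z_x$ gives $\Cal{H}_{w,z}\nsubseteq\Cal{H}_{x,y}$ for all $w\succ z$; hence $\Phi(z,A_z)=\emptyset$. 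Finally $B(x,\Cal{H}_{x,y})=\{b_{x,y}\}=\{x'\}$ with $x'\in\calT_j$, $j>i$, so $x$ is lifting with respect to $A_z$.

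For the second bullet, $z\in\maxcomp{\calT}$ by Corollary~\ref{cor2}. To see that $z$ is maximal I would show $G^z_z=\emptyset$: any $y\in G^z_z$ satisfies $y\succ z$, and choosing $w=y$ in the condition $\Cal{H}_{w,z}\nsubseteq\Cal{H}_{z,y}$ produces $\Cal{H}_{z,y}\nsubseteq\Cal{H}_{z,y}$, which is absurd; thus $BG^z_z=\emptyset$ and $z$ has no outgoing arc. For uniqueness, take any $x\in\maxcomp{\calT}$ with $x\neq z$; the $E^z$-argument above again yields $x'\in E^z_{d-1}\cap BG^z_x$ with $x'\succ x$, and since $x$ is already in the maximal component $x'$ cannot be strictly higher, so $x'\in\maxcomp{\calT}$ and the arc $x\to x'$ lies in the graph on $\maxcomp{\calT}$. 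Hence no $x\neq z$ is maximal, and $z$ is the unique maximal element.

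The step I expect to be the crux is the explicit construction of the $z$-safe scheme $A_z$ in the first bullet: one must simultaneously keep $z$ a local optimum and force a best neighbor of $x$ into a higher component through a single object. The clean way is to use $\Cal{H}_{x,y}$ itself as that object and to pad with singletons, but the verification that no adjoined object resurrects a preferred neighbor of $z$ (exactly where freeness and the defining inequality of $G^z_x$ enter) is the delicate point; the remainder reduces to unwinding the definitions of $BG^z_x$ and of lifting together with the order-theoretic observation that graph arcs only point at dominating outcomes.
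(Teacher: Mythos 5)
Your proof is correct, but for the first bullet it takes a genuinely different route from the paper's. The paper deduces that bullet from Proposition~\ref{prop1}-(\ref{item:prop1_exist}): given a maximal element $y\in\calT_i$, that proposition supplies a domination path inside $\calT_i$, through some scheme $\scheme$ with $\Phi(z,\scheme)=\emptyset$, ending at a lifting outcome; since each step of such a path is a $BG^z$-arc, maximality (i.e.\ $BG^z_y\cap\calT_i=\emptyset$) forces the path to have length zero, so $y$ itself is lifting with respect to that (non-explicit) scheme. You instead bypass Proposition~\ref{prop1} entirely: from $E^z=\confspace$ you extract $x'=b_{x,y}\in BG^z_x$, observe that maximality pushes it into a strictly higher component, and then exhibit a concrete $z$-safe scheme $\{\Cal{H}_{x,y}\}\cup\{\Cal{A}_{\{1\}},\dotsc,\Cal{A}_{\{n\}}\}$ witnessing that $x$ is lifting; your verification is sound, since freeness of $z$ kills $\Phi(z,\Cal{A}_{\{i\}})$ for each singleton, the defining inequality of $G^z_x$ kills $\Phi(z,\Cal{H}_{x,y})$, and $B(x,\Cal{H}_{x,y})=\{x'\}$ is part of the definition of $G^z_x$. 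Your construction buys self-containedness and an explicit witnessing scheme, at the cost of redoing work that Proposition~\ref{prop1} packages; the paper's argument is shorter but leaves the scheme implicit. For the second bullet the two arguments essentially coincide (maximality gives $BG^z_y=\emptyset$, and $y\in\Psi(z)=E^z$ then forces $y=z$), except that you additionally check that $z$ itself is maximal via $G^z_z=\emptyset$ (taking $w=y$ in the defining condition) --- a point the statement asserts but the paper's proof silently omits, so this is a small improvement.
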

\begin{proof}
Let $y\in\calT_i\neq\maxcomp{\calT}$ be a maximal element of the graph constructed
above.
By virtue of Proposition~\ref{prop1}-(\ref{item:prop1_exist}) there exists a domination path
$$
y=y_0 \prec y_1 \prec \dotsb \prec y_d = y'
$$
from $y$ to a lifting social outcome $y'\in\calT_i$.
Since $y$ is a maximal element, the set $BG_y^z \cap \calT_i$ is empty and hence $d$ is zero.
Therefore, $y'$ equals $y$ and hence $y$ is lifting.

Similarly, let $y\in\maxcomp{\calT}$ be a maximal element of the graph constructed
above.
The set $BG_y^z \cap \maxcomp{\calT}$ is empty and hence the whole $BG_y^z$ is empty.
Since $y$ belongs to $\Psi(z)$, we obtain that $y$ equals $z$.
This concludes the proof.
\end{proof}

\begin{prop}
The score of a local optimum is at least $\sum_{j=1}^n (m_j-1)$.
\end{prop}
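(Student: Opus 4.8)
The plan is to count the social outcomes lying at prominent distance exactly one from $z$ and then to use Theorem~\ref{principio} to show that $z$ dominates every one of them. First I would record the setup: since we consider only complete strict social rules, the graph $\Cal{Y}_{\succ}$ is a tournament on the $M$ social outcomes, and the score of $z$ is precisely $\#\{x\in\confspace\mid z\succ x\}$. The word ``local optimum'' should be read as ``local optimum for some objects scheme'', which by Theorem~\ref{principio} is exactly the condition that $z$ be \emph{free}, i.e.~that $d_p(w,z)>1$ for every $w\succ z$.

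Next I would count the social outcomes $w$ with $d_p(w,z)=1$. Writing $z=v_1\dotsm v_n$, such a $w$ differs from $z$ in exactly one feature. For each feature index $j\in\{1,\dotsc,n\}$ there are exactly $m_j-1$ admissible values in $\{0,\dotsc,m_j-1\}$ other than $v_j$, and each of them determines a distinct social outcome that agrees with $z$ on all the remaining features. Outcomes obtained by altering different features are themselves distinct, since they disagree with $z$ in different coordinates. Hence the number of social outcomes at prominent distance exactly one from $z$ equals $\sum_{j=1}^n (m_j-1)$.

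Finally I would invoke freeness. Since $z$ is free, the contrapositive of the characterization in Theorem~\ref{principio} shows that $d_p(w,z)=1$ forces $w\not\succ z$, and completeness together with strictness then yields $z\succ w$. Therefore each of the $\sum_{j=1}^n(m_j-1)$ social outcomes at prominent distance one from $z$ is dominated by $z$, whence the score of $z$ is at least $\sum_{j=1}^n(m_j-1)$, as claimed.

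I do not expect a serious obstacle here: the statement reduces to a direct counting argument combined with Theorem~\ref{principio}. The only points demanding a little care are the interpretation of ``local optimum'' as ``free'' (so that Theorem~\ref{principio} applies) and the verification that the $\sum_{j=1}^n(m_j-1)$ candidate outcomes are genuinely distinct; both are immediate from the definitions of prominent distance and of a social outcome as an $n$-tuple of feature values.
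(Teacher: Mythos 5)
Your proposal is correct and follows essentially the same argument as the paper, which proves the bound in one sentence by invoking Theorem~\ref{principio} to conclude that a local optimum $z$ must dominate all $\sum_{j=1}^n (m_j-1)$ social outcomes $w$ with $d_p(w,z)=1$. Your write-up merely makes explicit the counting of such $w$ and the use of completeness and strictness, both of which the paper leaves implicit.
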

\begin{proof}
By virtue of Theorem~\ref{principio} each local optimum $z$ must dominate the $\sum_{j=1}^n (m_j-1)$ social outcomes $w$ with $d_p(w,z)=1$.
\end{proof}

\begin{rem}
The bound in the proposition above seems to be quite weak, mainly because in the classical social choice framework the score of an optimum is $M-1$.
However, the bound above is attained.
Namely, let $\succ$ be any social rule such that $z\succ x$ for each social outcome $x$ with $d_p(x,z)=1$, and $z\prec x$ for each social outcome $x$ with $d_p(x,z)>1$.
For $\succ$ the social outcome $z$ is free (and hence is a local optimum for an objects scheme $A_z$) and has score $\sum_{j=1}^n (m_j-1)$.

Moreover, there can be also global optima with score $\sum_{j=1}^n (m_j-1)$.
This can be easily obtained by suitably choosing the arcs of the social rule $\succ$ that are not fixed above, so we leave it to the reader.
\end{rem}

\section{Probability}
\label{sec:probability}

As above, let $X$ be the set of possible social outcomes given by a bundle of features $F=\{f_1,\dotsc ,f_n\}$ such that
$f_i$ belongs to $\{0,1,2,\dotsc, m_i-1\}$ for $i=1,\dotsc,n$.
Throughout this section, we will suppose (without loss of generality) that the $m_i$'s are ordered decreasingly:
$$
m_1 \geqslant m_2 \geqslant \dotsb \geqslant m_n.
$$

\begin{teo}\label{teo:max_free}
In the hypothesis above, any given social rule $\succ$ on $X$ has at most
$$
\prod_{i=2}^{n} m_i
$$
local optima, and this bound is attained.
\end{teo}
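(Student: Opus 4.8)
The plan is to identify the ``local optima'' with the \emph{free} social outcomes: by Theorem~\ref{principio} (and the definition of freeness recalled just after it), a social outcome is a local optimum for some objects scheme if and only if it is free, so the count we must control is the number of free outcomes. I would first establish the upper bound by a line-counting argument, and then prove attainment by exhibiting an explicit social rule with the maximal number of free outcomes.

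For the upper bound, the key reformulation is that $z$ is free if and only if $z \succ w$ for every $w$ with $d_p(w,z)=1$, i.e.\ $z$ dominates every outcome differing from it in exactly one feature. I would then partition $X$ into the fibres of the projection $(v_1,\dotsc,v_n)\mapsto(v_2,\dotsc,v_n)$; there are exactly $\prod_{i=2}^n m_i$ of these ``lines in the first feature,'' each consisting of the $m_1$ outcomes sharing a fixed value of $(v_2,\dotsc,v_n)$. The $m_1$ outcomes on a single line are pairwise at prominent distance $1$, so the restriction of $\succ$ to a line is a tournament on $m_1$ nodes. A free outcome lying on a line must dominate the other $m_1-1$ outcomes of that line, hence is a source of this tournament; since a tournament has at most one source, each line carries at most one free outcome, giving at most $\prod_{i=2}^n m_i$ free outcomes in all. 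The hypothesis $m_1\geq\dotsb\geq m_n$ ensures this is the sharpest of the $n$ bounds obtainable by choosing different feature directions.

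For attainment I would build a social rule making a full family of size $\prod_{i=2}^n m_i$ free. Set $g(v_2,\dotsc,v_n)=\left(\sum_{i=2}^n v_i\right)\bmod m_1$ and $S=\{(g(v_2,\dotsc,v_n),v_2,\dotsc,v_n)\}$, which has exactly $\prod_{i=2}^n m_i$ elements. If two elements of $S$ differ, their tails differ; and if the tails differ in exactly one coordinate $j\geq 2$, then since $0\leq v_j,v_j'<m_j\leq m_1$ we have $v_j'-v_j\not\equiv 0\pmod{m_1}$, so the first coordinates differ too. Hence any two distinct elements of $S$ differ in at least two features, i.e.\ $S$ is independent in the prominent-distance-$1$ graph. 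Now define a strict complete rule $\succ$ by forcing $z\succ w$ for every $z\in S$ and every $w$ with $d_p(w,z)=1$, the remaining comparisons being chosen arbitrarily (but strictly). These constraints are consistent: a conflict would require a pair $\{p,q\}$ forced in both directions, which would mean $p,q\in S$ with $d_p(p,q)=1$, contradicting independence of $S$. Under this rule every $z\in S$ dominates all its prominent-distance-$1$ neighbours and is therefore free, so there are at least $\prod_{i=2}^n m_i$ local optima, and by the upper bound exactly this many.

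The main obstacle, I expect, lies in the attainment half rather than the bound: one must produce a maximum independent set of the prominent-distance-$1$ graph (equivalently, a minimum-distance-$2$ code over the mixed alphabet $\prod\{0,\dotsc,m_i-1\}$) and, more delicately, verify that simultaneously forcing every member of $S$ to dominate all its neighbours is \emph{consistent} with strictness. The verification that $g$ really yields prominent distance $\geq 2$ is exactly where $m_j\leq m_1$ is used, so the decreasing ordering of the $m_i$ is essential not only for sharpness of the bound but also for the construction.
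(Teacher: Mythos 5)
Your proposal is correct, but it takes a genuinely different route from the paper's. The paper proves the upper bound by induction on the number of features: it slices $X$ along the values of the last feature, notes that a local optimum of $\succ$ is still a local optimum for the rule induced on its slice, and sums the inductive bound over the $m_{n+1}$ slices. Your bound is non-inductive: after the same identification of local optima with free outcomes via Theorem~\ref{principio}, you partition $X$ into the $\prod_{i=2}^{n} m_i$ lines parallel to the first feature, observe that $\succ$ restricted to a line is a tournament on $m_1$ nodes that are pairwise at prominent distance $1$, and use the fact that a tournament has at most one node dominating all others; this is more elementary and makes transparent that the ordering hypothesis is only needed so that $\prod_{i=2}^{n}m_i=M/m_1$ is the sharpest of the $n$ available bounds. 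For attainment the difference is starker: the paper must strengthen its induction hypothesis (its Assertion: at least $m_n$ rules with pairwise disjoint sets of local optima, optima in different slices being separated by two features), precisely so that slice rules can be glued and cyclically shifted, and the inequality $m_{n+1}\leqslant m_n$ is what makes the gluing work; your construction is a one-shot explicit one, taking $S$ to be the distance-$2$ code cut out by the checksum $g(v_2,\dotsc,v_n)=\bigl(\sum_{i\geqslant 2}v_i\bigr)\bmod m_1$ (where $m_j\leqslant m_1$ plays the role the paper assigns to $m_{n+1}\leqslant m_n$), and the consistency of forcing each member of $S$ to beat its prominent-distance-$1$ neighbours follows from independence of $S$ plus Theorem~\ref{principio}. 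What the paper's heavier induction buys is the Assertion itself, i.e.\ a family of rules with pairwise disjoint optima sets; but nothing is really lost in your approach, since translating the checksum ($g+c\bmod m_1$, $c=0,\dotsc,m_1-1$) partitions $X$ into $m_1\geqslant m_n$ such codes and recovers, indeed strengthens, that statement.
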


\begin{proof}
The proof of the bound is by induction on the number $n$ of features.
If $n$ is $1$, there is at most one local optimum (one when the social
rule is transitive, zero otherwise).
Suppose now the statement true for $n$ and suppose $\succ$ is defined on social oucomes with $n+1$ features.
If $j$ belongs to $\{0,1,2,\dotsc, m_{n+1}-1\}$ for the $(n+1)$-th feature, we define the subspace $V_n^j=\{(y_1,\ldots,y_{n+1})\in \R^{n+1} \mid y_{n+1}=j-\frac12\}$ of $\R^{n+1}$ having dimension $n$.
Let $X_n^j$ be the set of all the social outcomes in $X$ whose corresponding chambers intersect $V_n^j$.
Then, for any $j\in\{0,1,2,\dotsc, m_{n+1}-1\}$,  $X_n^j$ is 
given by $n$ features taking $m_1 \geqslant \dotsb \geqslant m_n$ values.
By induction, $X_n^j$ has at most $\prod_{i=2}^n m_i$ local optima for any $j\in\{0,1,2,\dotsc, m_{n+1}-1\}$.
Moreover, by definition of local optimum, if $x \in X$ is a local optimum for the social rule $\succ$, it is a local 
optimum also for the social rule $\succ^j$ induced by $\succ$ on $X_n^j$.
Therefore, the number of local optima in $X$ is at most the sum 
over $j\in\{0,1,2,\dotsc, m_{n+1}-1\}$ of the maximum number of local optima of each $X_n^j$, i.e. $\prod_{i=2}^{n+1}m_i$.
This concludes the proof of the bound.

In order to prove that the bound is attained, we will prove the following slightly stronger statement.

\smallskip
\noindent
\textsc{Assertion.}
In the hypothesis above, there exist at least $m_n$ social rules with exactly $\prod_{i=2}^{n}m_i$ local optima and such that for any two of them the sets of local optima are disjoint.
\smallskip

The proof of the assertion is by induction on the number $n$ of features.
If $n$ is $1$, any transitive social rule has one local optimum (the global one).
Moreover, for any social outcome $z$ there is a transitive social rule with $z$ as local optimum.
Since there are $m_1$ social outcomes, we obtain the thesis.

Suppose now the statement true for $n$ and let $\confspace$ be a set of social outcomes with $n+1$ features.
As above, we define the subspace $V_n^j$ and the set $X_n^j$, for $j=0,\dotsc, m_{n+1}-1$.
By induction, on each $X_n^j$ we can define a social rule $\succ^j$ with exactly $\prod_{i=2}^{n}m_i$ local optima and such that the local optima in two different $X_n^j$ are separated by at least two features (one of which being $f_{n+1}$), because $m_{n+1} \leqslant m_n$ holds.
More precisely, if $v_1\dotsm v_n j \in X_n^j$ is a local optimum for $\succ^j$ and $v'_1\dotsm v'_n j' \in X_n^{j'}$ is a local optimum for $\succ^{j'}$ with $j\neq j'$, then there exists a feature $f_k$ different from $f_{n+1}$ such that $v_k$ differs from $v'_k$.
Therefore, there exists a social rule $\succ$ on $\confspace$ that satisfies the following properties:
\begin{itemize}
\item $\succ$ equals $\succ^j$ on $X_n^j$,
\item if $x \in X_n^j$ is a local optimum for $\succ^j$ then $x\succ y$ for all $y \in X \setminus X_n^j$ such that $d_p(x,y)=1$.
\end{itemize}
This social rule has $\prod_{i=2}^{n+1}m_i$ free social outcomes, which are local optima by virtue of Theorem~\ref{principio}; therefore, it is one of the social rules we are looking for.

The other ones can be obtained by shifting the pairing of $X_n^*$'s and $\succ^*$'s.
More precisely, for $l=0,\dotsc,m_{n+1}-1$, the $l$-th social rule $\succ_l$ is defined by choosing on $X_n^j$ the social rule $\succ^h$, where $h$ is the remainder of the division of $j+l$ by $n+1$, and by repeating the procedure above.
These $m_{n+1}$ social rules on $\confspace$ have $\prod_{i=2}^{n+1}m_i$ local optima each.
Moreover, if $v_1\dotsm v_n j \in X_n^j$ is a local optimum for $\succ_l$ and $v'_1\dotsm v'_n j' \in X_n^{j'}$ is a local optimum for $\succ_{l'}$ with $l\neq l'$, then by construction either $j$ differs from $j'$ or $v_1\dotsm v_n$ differs from $v'_1\dotsm v'_n$.
This concludes the proof of the assertion, and hence the proof of the theorem.
\end{proof}

\paragraph{Social rules with a fixed number of free social outcomes in the two-feature case}
We compute the number of social rules with two features and a fixed number of free social outcomes.
Note that, by virtue of Theorem~\ref{teo:max_free}, there are at most $m_2$ free social outcomes.
Let us call $e_k$ the number of social rules with $k$ free social outcomes.
We will count the graphs corresponding to the social rules.

Call $V_1$ (resp.~$V_2$) the set of values of the first (resp.~second) feature of the $k$ free social outcomes.
Since two free social outcomes are separated by both features, we have $\#V_1=\#V_2=k$.
There are $\binom{m_1}k$  (resp.~$\binom{m_2}k$) possibilities for choosing $V_1$ (resp.~$V_2$).
Moreover, in $V_1\times V_2$, the $k$ free social outcomes can be chosen in $k!$ ways.

Suppose now that the position of the $k$ free social outcomes is fixed.
For each $k=0,1,\dotsc,m_2$, we compute an integer $a_k$ which is related to (but different from) $e_k$, because we allow some repetitions in the counting process.
Since the $k$ social outcomes are free, each of them dominates all the social outcomes that are separated from it by one feature.
Therefore, $k\left(m_1+m_2-2\right)$ arcs are fixed.
If $k$ equals $m_2$, the other $\binom{m_1m_2}2 - m_2\left(m_1+m_2-2\right)$ arcs are unrestricted and hence we obtain
$$
e_{m_2} = \binom{m_1}{m_2} \binom{m_2}{m_2} m_2!  2^{\binom{m_1m_2}2 - m_2\left(m_1+m_2-2\right)}
$$
graphs.
For general $k$, the other $\binom{m_1m_2}2 - k\left(m_1+m_2-2\right)$ arcs are not unrestricted, because there should not be any other free social outcome.
However, if we leave them unrestricted, we obtain
$$
a_k = \binom{m_1}k \binom{m_2}k k! 2^{\binom{m_1m_2}2 - k\left(m_1+m_2-2\right)}
$$
graphs.
In this process we count each graph with $k+l$ free social outcomes $\binom{k+l}{k}$ times and hence we obtain the system of linear equations
$$
\left\{
\begin{array}{l}
\sum_{k=0}^{m_2}\binom{k}{0}e_k = a_0 \\
\sum_{k=1}^{m_2}\binom{k}{1}e_k = a_1 \\
\quad\quad\quad\vdots \\
e_{m_2-1} + m_2e_{m_2} = a_{m_2-1} \\
e_{m_2}=a_{m_2}.
\end{array}
\right.
$$
By (partially) solving this system, we obtain the recursive formula
$$
e_k = a_k - \sum_{l=k+1}^{m_2}\binom{l}{k}e_l
$$
for computing the number of social rules with two features and $k$ free social outcomes.
An explicit formula can be given.
If $S$ is a subset of $\{k,k+1,\dotsc,i\}$, we denote by $\mathop{\mathrm{Prod}}(S)$ the product
$$
\prod_{j=1}^{\#S-1} \binom{s_{j+1}}{s_j},
$$
where the $s_*$'s are the elements of $S$ ordered increasingly ($s_1<s_2<\dotsb<s_{\#S}$).
The number of social rules with two features and $k$ free social outcomes is 
$$
e_k =
\sum_{i=k}^{m_2}
\left(
\sum_{\substack{S\subseteq\{k,k+1,\dotsc,i\}\\ k,i\in S}}
(-1)^{\#S+1} \mathop{\mathrm{Prod}}(S)
\right)
a_i.
$$
The proof of this formula, by means of a recursion from $m_2$ to zero, is straightforward, so we leave it to the reader.

With an effort one may carry out a similar argument to compute the number of social rules with three features and a fixed number of free social outcomes, but a general formula seems to be unfeasible with this technique.

An interesting issue is to study the probability $P_{(m_1,\dotsc,m_n)}(k)$ that a social rule has $k$ free social outcomes.
For the social rules with two features, this quotient is
$$
P_{(m_1,m_2)}(k) = \frac{e_k}{2^{\binom{m_1m_2}2}}.
$$
In Table~\ref{table:prob_fixed_outcomes} we have computed it for small values of $m_1$ and $m_2$.
\begin{table}
  \begin{center}
  \begin{small}
  \begin{tabular}{c@{\hspace{24pt}}cccc}
  \toprule
  free social & \multicolumn{4}{c}{values for the features} \\
  outcomes & (2,2) & (3,3) & (5,5) & (10,10) \\
  \midrule
  0 & .125 & .5063476563 & .9053598846 & .9996185892 \\
  1 & .75 & .4262695313 & .0916594645 & .0003813519 \\
  2 & .125 & .0659179688 & .0029453066 & .0000000589 \\
  3 & . & .0014648438 & .0000352051 & $<10^{-10}$ \\
  4 & . & . & .0000001392 & $<10^{-10}$ \\
  5 & . & . & .0000000001 & $<10^{-10}$ \\
  6 & . & . & . & $<10^{-10}$ \\
  7 & . & . & . & $<10^{-10}$ \\
  8 & . & . & . & $<10^{-10}$ \\
  9 & . & . & . & $<10^{-10}$ \\
  10 & . & . & . & $<10^{-10}$ \\
  \bottomrule
  \end{tabular}
  \end{small}
  \end{center}
  \caption{The probability that a social rule with two features has a fixed number of free social outcomes.}
  \label{table:prob_fixed_outcomes}
\end{table}

\paragraph{Decidability and manipulability in the new framework}
In the classical social choice framework a given social outcome $z$ is an optimum if and only if it dominates all the other social outcomes.
Therefore, the probability $P(z)$ that a given social outcome $z$ is an optimum for a social rule on $M$ social outcomes is given by the quotient between the number of graphs with $M-1$ nodes and the number of graphs with $M$ nodes, i.e.
$$
P(z)=\frac{2^{\binom{M-1}{2}}}{2^{\binom{M}{2}}} = \frac{1}{2^{M-1}}.
$$

In Marengo and the second author's model, global optima play the role of optima in the classical
framework, but also a local optimum can be an optimum if the agents vote starting from a particular social outcome.
The probability $P(z)$ for a given social outcome $z$ to be a local optimum is given by
the quotient between the number of the graphs with $M$ nodes and with
$\sum_{i=1}^{n}m_i -n $ fixed arcs, and the number of
all the graphs with $M$ nodes, i.e.
$$
P(z)=\frac{2^{\binom{M}{2}-(\sum_{i=1}^{n}m_i-n)}}{2^{\binom{M}{2}}}
= \frac{1}{2^{\sum_{i=1}^{n}m_i -n}}=\frac{2^n}{2^{\sum_{i=1}^{n}m_i}}.
$$

It is clear that, if $n$ is greater than $1$, the probability for $z$ to be a local optimum is far greater than that to be an optimum in the classical framework.
Therefore, we define a function $F: \mathbb N^3 \longrightarrow \mathbb
Q $ depending on $n$, $M=\prod_{i=1}^n m_i$ and $\sigma=\sum_{i=1}^n
m_i$, defined to be the quotient between the probability of a social outcome to be an optimum
in the classical framework and that to be a local optimum in the new model,
$$
F(n,M,\sigma)=\frac{2^n}{2^{\sum_{i=1}^{n}m_i}}2^{M-1}=2^{n+M-(\sigma-1)}.
$$
\begin{rem}
The inequality
$$
F(n,M,\sigma) \geqslant 1
$$
holds.
However, the inequality becomes strict,
$$
F(n,M,\sigma) > 1,
$$
if and only if $n$ is greater than $1$.
\end{rem}

The study of the function $F$ seems to be very important in the social choice context.
Indeed, it gives an idea about the decidability and the manipulability of choice in the new model with respect to the old one.
We note that, for example, if $m_i$ is $2$ for $i=1,\dotsc,n$, there is a high probability for a social outcome to be a local optimum, while this probability strongly decreases when the values of the $m_i$'s increase.
However, the value of $F$ is far greater than $1$ even if the $m_i$'s are greater than $2$.
We think that this function $F$ is a measure of the power of this new approach in social decision theory.

\section{The algorithm}
\label{sec:algorithm}

We will describe here an algorithm to compute the universal basin of attraction of a social outcome $z$ of a social rule $\prec$.
It finds also the sets $E^z_i$ defined at the end of Section~\ref{sect:model}.
Therefore it obtains also the deepness of each social outcome with respect to $z$ and the u-deepness of $z$.

The algorithm \textsc{ComputeUniversalBasin} works as follows.
The pseudocode is shown in Algorithm~\ref{alg:ComputeUniversalBasin}.
\begin{algorithm}
\begin{footnotesize}
\begin{algorithmic}[1]
\REQUIRE A social rule $\succ$ and a social outcome $z$
\ENSURE The non-void $E^z_i$'s and hence the universal basin of attraction $\Psi(z)=\bigcup_i E^z_i$.
\STATE \label{line:initialise_start} Initialize $\confspace \duepuntiuguale$ the set of all social outcomes
\FOR {$i=0$ \textbf{to} $M$}
	\STATE Initialize $E^z_i \duepuntiuguale \emptyset$
\ENDFOR \label{line:initialise_end}
\STATE \label{line:compute_irred} Compute the irreducible components $\calT_j$
\STATE \label{line:after_irred} Let $h \duepuntiuguale$ the integer such that $z\in\calT_h$
\STATE \label{line:remove_irred} $X \duepuntiuguale X\setminus\bigcup_{j>h}\calT_j$
\IF[$z$ is not a local optimum] {$\exists w\succ z$ such that $\#\featuresep{w,z}=1$} \label{line:is_local_opt}
	\RETURN \label{line:empty_basin} $\emptyset$ \COMMENT{$E^z_j=\emptyset\ \forall j=0,\dotsc,M$}
\ELSE[$z$ is a local optimum]
	\STATE \label{line:n1_start} $E^z_0 \duepuntiuguale \{z\}$
	\STATE $\confspace \duepuntiuguale \confspace\setminus\{z\}$
	\STATE \label{line:Bz} Compute $B(z) \duepuntiuguale \{y\in\confspace |
		y\prec z,\
		\featuresep{z,y}\not\supseteq\featuresep{w,z}\ \forall w\succ z\}$
	\STATE $E^z_1 \duepuntiuguale B(z)$
	\STATE \label{line:n1_end} $\confspace \duepuntiuguale \confspace\setminus B(z)$
	\RETURN \textsc{Recursion}($1$)
\ENDIF \label{line:first_part_end}
\STATE
\STATE \label{line:recursion} \textbf{where} \textsc{Recursion} \textbf{is the following function}
\REQUIRE An integer $i$
\ENSURE The non-void $E^z_i$'s and hence the universal basin of attraction $\Psi(z)=\bigcup_i E^z_i$.
\FORALL {$y_i\in E^z_i$} \label{line:moving_new_elm_start}
	\STATE \label{line:Byi} Compute $B(y_i) \duepuntiuguale \{y_{i+1}\in\confspace |
		y_{i+1}\prec y_i,\
		\featuresep{y_i,y_{i+1}}\not\supseteq\featuresep{w,y_i}\ \forall w\succ y_i,$\\
		$\featuresep{y_i,y_{i+1}}\not\supseteq\featuresep{w,z}\ \forall w\succ z\}$
	\STATE \label{line:Ei1_Byi} $E^z_{i+1} \duepuntiuguale E^z_{i+1}\cup B(y_i)$
	\STATE \label{line:X_Byi} $\confspace \duepuntiuguale \confspace\setminus B(y_i)$
\ENDFOR \label{line:moving_new_elm_end}
\IF[the universal basin of attraction has been computed] {$E^z_{i+1}=\emptyset$} \label{line:another_step}
	\RETURN \label{line:rec_stop} $E^z_j$ for all $j=0,\dotsc,i$ \COMMENT{$E^z_j=\emptyset\ \forall j>i$}
\ELSE[the next step]
	\RETURN \label{line:rec_recall} \textsc{Recursion}($i+1$)
\ENDIF
\end{algorithmic}
\end{footnotesize}
\caption{Pseudocode for the algorithm to compute the universal basin of attraction of a social outcome $z$ of a social rule $\prec$. (All sets are implemented as ordered lists.)}
\label{alg:ComputeUniversalBasin}
\end{algorithm}

\begin{enumerate}[Step~1.]
\item
Consider the set $\confspace$ of all social outcomes.
Start with the empty sets $E^z_i$ for $i\in\mathbb{N}$ (we will need only a finite number of them).
Eventually, the universal basin of attraction $\Psi(z)$ will be $\bigcup_i E^z_i$.

\item
\label{step:remove_irred}
Compute the irreducible components $\calT_*$ of the graph corresponding to the social rule $\prec$.
Let $h$ be the integer such that $z\in\calT_h$.
Remove from $X$ all the social outcomes that are in $\calT_j$ for all $j>h$.

\item
If $\featuresep{w,z}$ is made up of only one feature for some $w\succ z$, then $z$ is not a local optimum and hence $E^z_i$ is empty for all $i\in\mathbb{N}$: go to Step~\ref{item:final_step}.
Otherwise, $z$ is a local optimum: hence add $z$ to $E^z_0$ and remove it from $X$.

\item
Find all the social outcomes $y\in\confspace$ such that
\begin{gather*}
y\prec z,\\
\featuresep{z,y}\not\supseteq\featuresep{w,z}\ \mathrm{for\ all}\ w\succ z.
\end{gather*}
Add these $y$'s to $E^z_1$ and remove them from $\confspace$.
Go to Step~\ref{step:iter} with $i=1$.

\item
\label{step:iter}
For each $y_i\in E^z_i$ do the following steps.
\begin{itemize}
\item
Find all the social outcomes $y_{i+1}\in\confspace$ such that
\begin{gather*}
y_{i+1}\prec y_i,\\
\featuresep{y_i,y_{i+1}}\not\supseteq\featuresep{w,y_i}\
\mathrm{for\ all}\ w\succ y_i,\\
\featuresep{y_i,y_{i+1}}\not\supseteq\featuresep{w,z}\
\mathrm{for\ all}\ w\succ z.
\end{gather*}
\item
Add these $y_{i+1}$'s to $E^z_{i+1}$ and remove them from $\confspace$.
\item
If $E^z_{i+1}$ is empty, go to Step~\ref{item:final_step};
otherwise, repeat Step~\ref{step:iter} with $i$ incremented by $1$.
\end{itemize}

\item
\label{item:final_step}
The universal basin of attraction $\Psi(z)$ is the union of the $E^z_i$'s (only a finite number of them being non-empty).

\end{enumerate}

\begin{teo}
If the social rule $\prec$ is defined on $M$ social outcomes and a social outcome $z$ is given, the algorithm \textsc{ComputeUniversalBasin} computes the universal basin of attraction of $z$ in $O(M^3\log M)$ time.
\end{teo}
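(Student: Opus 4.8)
The plan is to bound the running time phase by phase, the decisive point being that the recursion touches each social outcome only boundedly often. First I would record the elementary fact that the number of features satisfies $n=O(\log M)$: since every feature takes at least two values, $M=\prod_{i=1}^n m_i\geqslant 2^n$, whence $n\leqslant\log_2 M$. Consequently each social outcome is an $n$-tuple stored in $O(\log M)$ space, each feature-separation set $\featuresep{x,y}$ has at most $n=O(\log M)$ elements, and a single inclusion test between two such sets costs $O(\log M)$. This is where the $\log M$ factor in the statement will come from.

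Next I would dispose of the preprocessing (lines~\ref{line:initialise_start}--\ref{line:first_part_end}). Initialising $\confspace$ and the $E^z_i$'s is $O(M)$; computing the irreducible components costs $O(M^2)$ by the algorithm recalled in Section~\ref{sec:graphs_tournaments}; removing the components above $\calT_h$ is $O(M)$; and the local-optimum test, which scans the $w\succ z$ and evaluates $\#\featuresep{w,z}$, costs $O(Mn)=O(M\log M)$. All of this will be dominated by the recursion, so it can be set aside after the tally.

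The heart of the argument is the observation that the sets $E^z_i$ are pairwise disjoint: every social outcome added to some $E^z_i$ is simultaneously removed from $\confspace$ (lines~\ref{line:n1_end} and~\ref{line:X_Byi}) and is therefore never re-examined. Hence the $E^z_i$ partition a subset of $\confspace$, so that the total count of source outcomes processed by \textsc{Recursion}, summed over all levels, is $\sum_i \#E^z_i=\#\Psi(z)\leqslant M$. For a single such $y_i$, computing $B(y_i)$ scans the at most $M$ surviving candidates $y_{i+1}\in\confspace$; testing one candidate amounts to the comparison $y_{i+1}\prec y_i$ together with the two ``no-inclusion'' conditions against the families $\{\featuresep{w,y_i}\mid w\succ y_i\}$ and $\{\featuresep{w,z}\mid w\succ z\}$, each of cardinality at most $M$ with members of size $O(\log M)$. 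Thus one candidate costs $O(M\log M)$, processing one $y_i$ costs $O(M^2\log M)$, and the precomputation of the family $\{\featuresep{w,y_i}\}$ for a fixed $y_i$ (another $O(M\log M)$) is absorbed into this.

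Multiplying the per-source cost $O(M^2\log M)$ by the at most $M$ sources yields $O(M^3\log M)$ for the recursion, which dominates the preprocessing and gives the claimed bound. I expect the main obstacle to be precisely the disjointness/partition step: without the guarantee that only $O(M)$ source outcomes are ever processed, the branching of the recursion would naively suggest a far worse, potentially exponential, estimate, and it is this bookkeeping---together with the careful tracking of the $\log M$ factor coming from $n$ and from the size of the $\featuresep{\cdot,\cdot}$ sets---that makes the polynomial bound go through.
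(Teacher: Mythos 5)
Your complexity analysis is sound and follows essentially the same route as the paper's: the same $O(\log M)$ cost for computing and comparing the sets $\featuresep{x,y}$ (your explicit derivation $n\leqslant\log_2 M$ from $M=\prod_i m_i\geqslant 2^n$ is a justification the paper leaves implicit), the same $O(M^2\log M)$ cost per processed outcome, and the same decisive observation that the $E^z_i$'s are pairwise disjoint subsets of $\confspace$, so that the recursion processes at most $M$ source outcomes in total. The paper expresses this as $\sum_i M_i \leqslant M$ with $M_i=\#E^z_i$ and concludes, exactly as you do, that the overall cost is $\bigl(\sum_i M_i\bigr)M^2\log M = O(M^3\log M)$; your disjointness argument also yields termination, which the paper proves as a separate first step.

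There is, however, a genuine gap: the theorem asserts that the algorithm \emph{computes the universal basin of attraction}, and your proposal never argues that the output $\bigcup_i E^z_i$ actually equals $\Psi(z)$. This correctness half is not routine bookkeeping; in the paper it rests on three earlier results. First, Theorem~\ref{teo:universal_basin} identifies $\Psi(z)$ with $E^z$, where the $E^z_i$'s are defined through the sets $BG^z_x$; one must check that the conditions tested in Lines~\ref{line:Bz} and~\ref{line:Byi} (the domination $y_{i+1}\prec y_i$ together with the two non-inclusion conditions on feature-separation sets) are precisely the membership conditions for these sets. Second, Proposition~\ref{prop:univ_basin_irred_comp} is needed to justify the preprocessing step that discards all irreducible components $\calT_j$ with $j>h$: without it, that step could throw away elements of $\Psi(z)$ before the recursion ever examines them. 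Third, Theorem~\ref{principio} justifies the early exit at Line~\ref{line:is_local_opt}: the existence of $w\succ z$ with $\#\featuresep{w,z}=1$ certifies that $z$ is not a local optimum for any objects scheme, hence that $\Psi(z)=\emptyset$. Without these ingredients you have bounded the running time of a procedure without establishing that it solves the stated problem.
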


\begin{proof}
We start by proving that the algorithm comes to an end.
The algorithm is tail-recursive, hence we need to prove that the recursive function \textsc{Recursion} (Line~\ref{line:recursion}) does not give rise to an infinite loop.
Each time \textsc{Recursion} is called, it may move elements (those belonging to $B(y_i)$ for each $y_i\in E^z_i$) from $\confspace$ to $E^z_{i+1}$ (Lines~\ref{line:moving_new_elm_start}-\ref{line:moving_new_elm_end}), and then it either stops \textsc{ComputeUniversalBasin} (Line~\ref{line:rec_stop}) or calls itself with $i$ incremented by $1$ (Line~\ref{line:rec_recall}).
Since $\confspace$ is finite, there is a minimal $\bar{\imath}\in\mathbb{N}$ such that all $B(y_{\bar{\imath}})$'s (and hence $E^z_{\bar{\imath}+1}$) are empty.
When \textsc{Recursion}($\bar{\imath}$) is called, it moves no element from $\confspace$ to $E^z_{\bar{\imath}+1}$, and then it stops \textsc{ComputeUniversalBasin} (Line~\ref{line:rec_stop}).

We now prove that the algorithm is correct.
By virtue of Theorem~\ref{teo:universal_basin}, we have that the universal basin of attraction $\Psi(z)$ is $\bigcup_i E^z_i$.
By means of Proposition~\ref{prop:univ_basin_irred_comp}, we know that all the social outcomes that are in $\calT_j$ for all $j>h$ cannot be in the universal basin of attraction of $z$, and hence they can be removed from the set $\confspace$ of social outcomes that may be in the universal basin of attraction (Lines~\ref{line:compute_irred}-\ref{line:remove_irred}).
If $z$ is not a local optimum, all $E^z_i$'s are empty; in this case the condition in Line~\ref{line:is_local_opt} is true (in virtue of Theorem~\ref{principio}) and hence the output is the empty set (Line~\ref{line:empty_basin}).
Otherwise, if $z$ is a local optimum, $E^z_0$ is $\{z\}$ and $E^z_1$ is not empty (see the end of Section~\ref{sect:model}); in this case $E^z_0$ and $E^z_1$ are computed, and their elements are removed from $\confspace$ (Lines~\ref{line:n1_start}-\ref{line:n1_end}).
Then \textsc{Recursion} is called and $E^z_2$ is computed by means of the conditions of the end of Section~\ref{sect:model} (Lines~\ref{line:moving_new_elm_start}-\ref{line:moving_new_elm_end}).
If $E^z_2$ is empty (Line~\ref{line:another_step}), the universal basin of attraction has been computed (see the end of Section~\ref{sect:model}) and the output is $E^z_0$, $E^z_1$ and $E^z_2$ (Line~\ref{line:rec_stop}).
Otherwise, \textsc{Recursion} is called again and $E^z_3$ is computed.
An easy recursive argument now concludes the proof of the correctness of the algorithm.

We eventually show that our algorithm has run-time $O(M^3\log M)$.
First of all, we note that if $x$ and $y$ are social outcomes, $\featuresep{x,y}$ has $O(\log M)$ elements at most and hence it can be computed in $O(\log M)$ time.
Moreover, if $\bar{x}$ and $\bar{y}$ are other social outcomes, the test $\featuresep{x,y}\not\supseteq\featuresep{\bar{x},\bar{y}}$ can also be performed in $O(\log M)$ time.
We denote by $M_i$ the cardinality of the set $E^z_i$ for $i=0,\dotsc,M$.

The initializations in Lines~\ref{line:initialise_start}-\ref{line:initialise_end} are done in $O(M)$ time.
The computation of the irreducible components of Line~\ref{line:compute_irred} is done in $O(M^2)$ time (see Section~\ref{sec:graphs_tournaments}).
Lines~\ref{line:after_irred} and~\ref{line:remove_irred} are executed in $O(M)$ time.
The condition in Line~\ref{line:is_local_opt} can be checked in $O(M\log M)$ time.
Line~\ref{line:Bz} is executed in $O(M^2\log M)$ time and hence the same holds for Lines~\ref{line:n1_start}-\ref{line:n1_end}.

We will now take the call of \textsc{Recursion}($i$) into account.
Line~\ref{line:Byi} is executed in $O(M^2\log M)$ time, while Lines~\ref{line:Ei1_Byi} and~\ref{line:X_Byi} are executed in $O(M)$ time.
These three lines are executed for all $y_i\in E^z_i$, i.e.~$M_i$ times.
The condition in Line~\ref{line:another_step} is checked in $O(1)$ time, and Line~\ref{line:rec_stop} is executed in $O(M)$ time.
Therefore, the call of \textsc{Recursion}($i$) has run-time $O(M_iM^2\log M)$.
Summing the run-time $O(M^2\log M)$ of Lines~\ref{line:initialise_start}-\ref{line:first_part_end} and the run-times of all calls of \textsc{Recursion}, we obtain
$$
M^2\log M + \sum_{i=1}^M M_iM^2\log M =
\left(\sum_{i=0}^M M_i\right)M^2\log M \sim
M^3\log M.
$$
This concludes the proof.
\end{proof}

\begin{rem}
The calculus of irreducible components (Step~\ref{step:remove_irred}) is not necessary, but it can make the computation faster if there are many social outcomes in the irreducible components that dominate $z$.
\end{rem}

\begin{rem}
In Step~\ref{step:iter} the social outcomes with deepness $i+1$ with respect to $z$ are found.
Therefore, the number of calls of the recursive function \textsc{Recursion} is the u-deepness of $z$.
\end{rem}

\begin{rem}
A (faster) simplification of the algorithm described above can be easily constructed to check whether a social outcome is in the universal basin of attraction of another one.
\end{rem}

\paragraph{FOSoR}
The first author has used Algorithm~\ref{alg:ComputeUniversalBasin} to write the computer program {\tt FOSoR}~\cite{FOSoR}.
It reads a social rule and can
\begin{itemize}
\item
compute the universal basin of attractions,
\item
check whether a social outcome is a local (or an u-local) optimum,
\item
check whether a social outcome is in the universal basin of attraction of
another one,
\item
check whether there is a local (or an u-local) optimum,
\item
find the number of local (or u-local) optima,
\item
find an objects scheme (if there is any) through which there is a maximal dominating path starting from a social outcome and ending up in another one,
\item
find the deepnesses and the u-deepnesses.
\end{itemize}

\section{Numerical examples}
\label{sec:numerical}

We give here some numerical results on the numbers of local and u-local optima of social rules.
In order to compute these results the first author has written the computer program {\tt FOSoRStat}~\cite{FOSoRStat}, which is based on Algorithm~\ref{alg:ComputeUniversalBasin}.
It reads the number of values of each feature and the number of random social rules to check.
It works as follows:
\begin{itemize}

\item
it repeatedly
\begin{itemize}
\item creates a random social rule,
\item computes the number of local (and u-local) optima;
\end{itemize}

\item
it computes the percentages and collects the results.

\end{itemize}

We have shown the results for local (resp.~u-local) optima in the case when each feature can assume two values in Table~\ref{table:statistics_local_2} (resp.~Table~\ref{table:statistics_ulocal_2}).
This case is interesting because it represents the binary choice (i.e.~yes/no, true/false, for/against features).
We have shown the results for local (resp.~u-local) optima in some other cases in Table~\ref{table:statistics_local_other} (resp.~Table~\ref{table:statistics_ulocal_other}).
Note that the relative frequencies in the cases with two features are consistent with the probabilities (computed in Section~\ref{sec:probability}) that a social rule with two features has a fixed number of free social outcomes, see Table~\ref{table:prob_fixed_outcomes}.
\begin{table}
  \begin{center}
  \begin{small}
  \begin{tabular}{c@{\hspace{15pt}}c@{\hspace{10pt}}c@{\hspace{10pt}}c@{\hspace{10pt}}c@{\hspace{10pt}}c@{\hspace{10pt}}c@{\hspace{10pt}}c@{\hspace{10pt}}c}
  \toprule
  local & \multicolumn{8}{c}{number of features} \\
  optima & 1 & 2 & 3 & 4 & 5 & 6 & 7 & 8 \\
  \midrule
  0 & . & .125298 & .234797 & .296109 & .328291 & .346168 & .359183 & .363905 \\
  1 & 1 & .749722 & .544492 & .451488 & .410650 & .390143 & .380017 & .375250 \\
  2 & . & .124980 & .206551 & .210998 & .201968 & .194522 & .188194 & .185849 \\
  3 & . & . & .013679 & .038372 & .050962 & .056757 & .058031 & .058879 \\
  4 & . & . & .000481 & .002934 & .007427  & .010837 & .012366 & .013346 \\
  5 & . & . & . & .000097 & .000657 & .001444 & .001964 & .002385 \\
  6 & . & . & . & .000002 & .000043 & .000120 & .000220 & .000341 \\
  7 & . & . & . & . & .000002 & .000009 & .000024 & .000041 \\
  8 & . & . & . & . & . & . & .000001 & .000003 \\
  9 & . & . & . & . & . & . & . & .000001 \\
  \bottomrule
  \end{tabular}
  \end{small}
  \end{center}
  \caption{Relative frequencies of social rules with a fixed number of local optima (over $10^6$ social rules): cases of two values for each feature.}
  \label{table:statistics_local_2}
\end{table}
\begin{table}
  \begin{center}
  \begin{small}
  \begin{tabular}{c@{\hspace{15pt}}c@{\hspace{10pt}}c@{\hspace{10pt}}c@{\hspace{10pt}}c@{\hspace{10pt}}c@{\hspace{10pt}}c@{\hspace{10pt}}c@{\hspace{10pt}}c}
  \toprule
  u-local & \multicolumn{8}{c}{number of features} \\
  optima & 1 & 2 & 3 & 4 & 5 & 6 & 7 & 8 \\
  \midrule
  0 & . &  & .270704 & .353896 & .377606 & .377254 & .374337 & .369871 \\
  1 & 1 &  & .716457 & .608455 & .551384 & .511074 & .472559 & .438681 \\
  2 & . &  & .012335 & .036169 & .066967 & .101588 & .133080 & .157869 \\
  3 & . &  & .000504 & .001460 & .003919 & .009610 & .018503 & .029815 \\
  4 & . &  & . & .000020 & .000123 & .000456 & .001444 & .003468 \\
  5 & . &  & . & . & .000001 & .000016 & .000074 & .000279 \\
  6 & . &  & . & . & . & .000002 & .000003 & .000017 \\
  \bottomrule
  \end{tabular}
  \end{small}
  \end{center}
  \caption{Relative frequencies of social rules with a fixed number of u-local optima (over $10^6$ social rules): cases of two values for each feature.}
  \label{table:statistics_ulocal_2}
\end{table}
\begin{table}
  \begin{center}
  \begin{small}
  \begin{tabular}{c@{\hspace{24pt}}ccccc}
  \toprule
  local & \multicolumn{5}{c}{values for the features} \\
  optima & (3,3) & (3,3,3) & (3,3,3,3) & (5,5) & (10,10) \\
  \midrule
  0 & .5065899 & .6392066 & .7246560 & .905331876 & .9996083 \\
  1 & .4260296 & .3042338 & .2376727 & .091717916 & .0003917 \\
  2 & .0659261 & .0522738 & .0345455 & .002915423 & . \\
  3 & .0014544 & .0041184 & .0029567 & .000034649 & . \\
  4 & . & .0001637 & .0001618 & .000000136 & . \\
  5 & . & .0000037 & .0000071 & . & . \\
  6 & . & . & .0000002 & . & . \\
  \midrule
  repetitions & $10^7$ & $10^7$ & $10^7$ & $10^9$ & $10^7$ \\
  \bottomrule
  \end{tabular}
  \end{small}
  \end{center}
  \caption{Relative frequencies of social rules with a fixed number of local optima (the number of repetitions is indicated in the last line): other cases.}
  \label{table:statistics_local_other}
\end{table}
\begin{table}
  \begin{center}
  \begin{small}
  \begin{tabular}{c@{\hspace{24pt}}cccccccc}
  \toprule
  u-local & \multicolumn{5}{c}{values for the features} \\
  optima & (3,3) & (3,3,3) & (3,3,3,3) & (5,5) & (10,10) \\
  \midrule
  0 & .8020871 & .9699638 & .9966669 & .999923702 & 1 \\
  1 & .1979129 & .0300356 & .0033330 & .000076298 & . \\
  2 & . & .0000006 & . & . & . \\
  \midrule
  repetitions & $10^7$ & $10^7$ & $10^7$ & $10^9$ & $10^7$ \\
  \bottomrule
  \end{tabular}
  \end{small}
  \end{center}
  \caption{Relative frequencies of social rules with a fixed number of u-local optima (the number of repetitions is indicated in the last line): other cases.}
  \label{table:statistics_ulocal_other}
\end{table}

Eventually, we compare Marengo and the second author's model with the classical one.
Note that in the classical model there can be only one optimum and that the probability $P(M)$ that a social rule with $M$ social outcomes has an optimum equals $M$ times the probability that a given social outcome is an optimum, i.e.~$\frac{M}{2^{M-1}}$.
In Table~\ref{table:statistics_old_model} we have computed this probability for small values of $M$.
\begin{table}
  \begin{center}
  \begin{small}
  \begin{tabular}{r@{\extracolsep{12pt}}lcr@{\extracolsep{12pt}}lcr@{\extracolsep{12pt}}lcr@{\extracolsep{12pt}}l}
  \toprule
  $M$ &   $P(M)$ & & $M$ &   $P(M)$ & & $M$ &   $P(M)$ & & $M$ &   $P(M)$ \\
  \cmidrule{1-2}\cmidrule{4-5}\cmidrule{7-8}\cmidrule{10-11}
  2 & 1 & & 6 & .1875 & & 10 & .019531 & & 14 & .001709 \\
  3 & .75 & & 7 & .109375 & & 11 & .010742 & & 15 & .000915 \\
  4 & .5 & & 8 & .0625 & & 12 & .005859 & & 16 & .000488 \\
  5 & .3125 & & 9 & .035156 & & 13 & .003174 & & 17 & .000259 \\
  \bottomrule
  \end{tabular}
  \end{small}
  \end{center}
  \caption{The probability that a social rule has an optimum in the classical model.}
  \label{table:statistics_old_model}
\end{table}

\section{An example}

In this section we will describe in detail an example in which all kinds of optima appear.
It is so small that we can deal with it by hands.
The number of features is three, assuming two values each.
The set $X$ is made up of eight social outcomes: $v_1v_2v_3$ with $v_*=0,1$.
The social rule is any $\succ$ with
\begin{align*}
& 000 \succ 100,\quad 000 \succ 010,\quad 000 \succ 001,\quad 000 \succ 101,\quad 000 \succ 011, \\
& 110 \succ 000,\quad 110 \succ 100,\quad 110 \succ 010,\quad 110 \succ 101,\quad 110 \succ 011, \\
& 101 \succ 100,\quad 101 \succ 001,\quad 101 \succ 111, \\
& 011 \succ 010,\quad 011 \succ 001,\quad 011 \succ 101,\quad 011 \succ 111, \\
& 111 \succ 110,
\end{align*}
where the ten preferences that are not defined are arbitrary.
The preferences are shown in Figure~\ref{fig:example}, where we have disposed the social outcomes as the vertices of a cube.
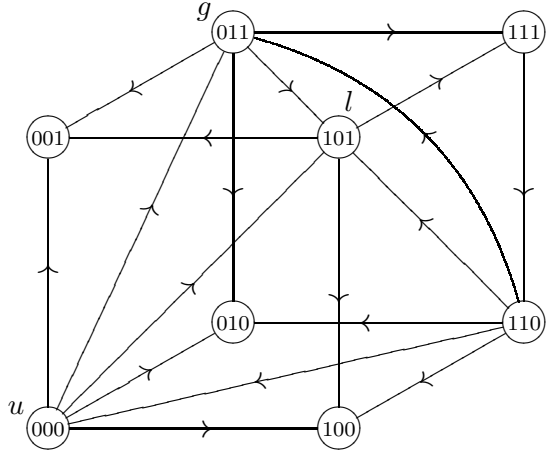
\begin{figure}[t]
  \begin{center}
        $\xymatrix{
        & & \nodethree{011} \ar @{} [l]_<<*+<3pt>{\txt{\begin{small}$g$\end{small}}}
          \arc{[ddd]} \arc{[lld]} \arc{[rd]} \arc{[rrr]}
            & & & \nodethree{111} \arc{[ddd]} \\
        \nodethree{001} & & &
            \nodethree{101} \ar @{} [u]_<<<*-<2pt>{\txt{\begin{small}$l$\end{small}}}
              \arc{[ddd]} \arccurvshift{[lll]}{}{.46} \arc{[rru]} & & \\
        & & & & & \\
        & & \nodethree{010} & & &
            \nodethree{110} \arc{[llllld]} \arc{[lld]} \arc{[lll]} \arc{[lluu]} \arccurvshift{[llluuu]}{@/_24pt/}{.5} \\
        \nodethree{000} \ar @{} [u]^<*+<12pt>{\txt{\begin{small}$u$\end{small}}}
          \arc{[rrr]} \arc{[rru]} \arc{[uuu]} \arccurvshift{[rrruuu]}{}{.5} \arc{[rruuuu]}
            & & & \nodethree{100} & & 
        }$
  \end{center}
  \caption{A social rule with a global optimum, an u-local optimum and a local optimum. (The ten preferences that are not drawn are arbitrary.)}
  \label{fig:example}
\end{figure}

We will show that the social outcome $g=011$ is a global optimum, that the social outcome $u=000$ is an u-local optimum but not a global optimum, and that the social outcome $l=101$ is a local optimum but not an u-local optimum.

The proof that $g$ is a global optimum for the objects scheme $A_g=\{\{H_{1,0},H_{2,0}\},\{H_{3,0}\}\}$ is straightforward, so we leave it to the reader.

In order to prove that $u$ is an u-local optimum, we note that we have
$$
\Psi (u,(\{H_{2,0},H_{3,0}\},\{H_{1,0}\},\{H_{3,0}\})) = \confspace\setminus\{l\}
$$
and
$$
l \in \Psi (u,(\{H_{1,0},H_{3,0}\},\{H_{2,0}\})).
$$
Therefore, $\Psi(u)$ is the whole $\confspace$, and $u$ is an u-local optimum.
We will now prove that $u$ is not a global optimum.
Suppose by way of contradiction that $u$ is a global optimum for an agenda $\alpha$ of an objects scheme $\scheme$, i.e.~$\Psi(u,(\module_1,\dotsc,\module_k))=X$ (where $\module_i=\module_j$ is allowed).
Since $u$ is a local optimum for $\scheme$ (see Remark~\ref{rem:basin_local}), the objects $\{H_{1,0},H_{2,0}\}$ and $\{H_{1,0},H_{2,0},H_{3,0}\}$ cannot belong to $\scheme$.
Since $l$ (resp.~$g$) belongs to $\Psi(u,(\module_1,\dotsc,\module_k))$, the object $\{H_{1,0},H_{3,0}\}$ (resp.~$\{H_{2,0},H_{3,0}\}$) belongs to $\scheme$ and hence $\{H_{1,0},H_{3,0}\}=\module_i$ and $\{H_{2,0},H_{3,0}\}=\module_j$ for some $i$ and $j$ in $\{1,\dotsc,k\}$.
Since $l$ (resp.~$g$) belongs to $\Psi(u,(\module_1,\dotsc,\module_k))$, we have $i<j$ (resp.~$j<i$).
This is a contradiction and hence $u$ is not a global optimum.

Since $l$ is free, it is a local optimum for some objects scheme.
We will now prove that $l$ is not an u-local optimum (then, by virtue of Remark~\ref{rem:glob_uloc_loc}, it is not a global optimum, either).
If $\scheme$ is an objects scheme such that $\Phi(l,A)$ is empty, the objects $\{H_{1,0},H_{2,0}\}$, $\{H_{1,0},H_{3,0}\}$, $\{H_{2,0},H_{3,0}\}$ and $\{H_{1,0},H_{2,0},H_{3,0}\}$ cannot belong to $\scheme$.
Therefore, $\scheme$ is $\{\{H_{1,0}\},\{H_{2,0}\},\{H_{3,0}\}\}$ and hence $\Psi(l)$ equals $\Psi(l,\{\{H_{1,0}\},\{H_{2,0}\},\{H_{3,0}\}\})$.
Since $u$ does not belong to the last-mentioned basin of attraction, the social outcome $l$ is not an u-local optimum.

\begin{rem}
The social rule $\succ$ is the smallest one that has a global optimum, an u-local optimum and a local optimum.
Indeed, a social rule with less than eight nodes can have at most two features;
if there is only one feature, there is at most one local optimum (which is actually a global optimum);
if there are two features assuming $m_1$ and $m_2$ values (the smaller of which is $2$), by virtue of Theorem~\ref{teo:max_free} there are at most $\min\{m_1,m_2\}=2$ local optima.
\end{rem}

\begin{small}

\end{small}

\end{document}